\newtheorem{theorem}{Theorem}
\newtheorem{lemma}[theorem]{Lemma}
\newtheorem{remark}{Remark}
\newtheorem{example}{Example}
\newcommand{\indep}{\perp\hspace{-.25cm}\perp}
\renewcommand{\P}{\mathbb{P}}
\newcommand{\R}{\mathbb{R}}
\newcommand{\w }[1]{\widehat{#1}}
\renewcommand{\r}{ \rightarrow }
\newcommand{\lr}{ \longrightarrow }
\DeclareMathOperator{\argmin}{argmin}
\renewcommand{\t}[1]{\text{#1}}
\begin{document}

\title{Efficiency of $Z$-estimators indexed by the objective functions}
\author{Fran\c cois Portier\footnote{Universit\'{e} catholique de Louvain,
	Institut de Statistique, Biostatistique et Sciences Actuarielles,
	Voie du Roman Pays~20,
	B-1348 Louvain-la-Neuve, Belgium.
{E-mail:} \{francois.portier\}@uclouvain.be. 
This work has been supported by Fonds de la Recherche Scientifique (FNRS) A4/5 FC 2779/2014-2017 No.\ 22342320.} 
\smallskip\\
}
\maketitle

\begin{abstract}
We study the convergence of $Z$-estimators $\w \theta(\eta)\in \R^p$ for which the objective function depends on a parameter $\eta$ that belongs to a Banach space $\mathcal H$. Our results include the uniform consistency over $\mathcal H$ and the weak convergence in the space of bounded $\R^p$-valued functions defined on $\mathcal H$. 
Furthermore when $\eta$ is a tuning parameter optimally selected at $\eta_0$, we provide conditions under which an estimated $\w \eta$ can be replaced by $\eta_0$ without affecting the asymptotic variance. Interestingly, these conditions are free from any rate of convergence of $\w \eta$ to $\eta_0$ but they require the space described by $\w \eta$ to be not too large. We highlight several applications of our results and we study in detail the case where $\eta$ is the weight function in weighted regression.  

\noindent {\textbf{Keywords:}} $Z$-estimators, efficiency, weighted regression, empirical process.

\end{abstract}

\section{Introduction}\label{s1}

Let $P$ denote a probability measure defined on a measurable space $(\mathcal Z,\mathcal A)$ and let $(Z_1,\ldots,Z_n)$ be independent and identically distributed random elements with law $P$. Given a measurable function $f:\mathcal Z\rightarrow \mathbb{R}$, we define
\begin{align*}
Pf=\int fdP, \qquad \mathbb{P}_n f =n^{-1}\sum_{i=1}^n f(Z_i),\qquad \mathbb G_nf =n^{1/2}(\mathbb{P}_n-P)f,
\end{align*}
where $\mathbb G_n$ is called the empirical process. Considering the estimation of a Euclidean parameter $\theta_0\in \Theta\subset\mathbb{R}^p$, let $(\mathcal H,\|\cdot\|)$ denote a Banach space and let $\{\w \theta(\eta)\ : \ \eta\in \mathcal H\}$ be a collection of estimators of $\theta_0$ based on  the sample $(Z_1,\ldots,Z_n)$. Suppose furthermore that there exists $\eta_0\in \mathcal H$ such that $\w \theta(\eta_0)$ is efficient within the collection, i.e., $\w \theta(\eta_0)$ has the smallest asymptotic variance among the estimators of the collection. Such a situation arises in many fields of the statistics. For instance, $\eta$ can be the cut-off parameter in \textit{Huber robust regression}, or $\eta$ might as well be equal to the weight function in \textit{heteroscedastic regression} (see the next section for more details and examples). Unfortunately, $\eta_0$ is generally unknown since it certainly depends on the model $P$. Usually, one is restricted to first estimate $\eta_0$ by, say, $\w \eta$ and then compute the estimator $\w \theta(\w \eta)$, which should result in a not too bad approximation of $\theta_0$. It turns out that in many different situations $\w \theta(\w \eta)$ actually achieves the efficiency bound of the collection (see for instance \cite{newey1994book}, page 2164, and the reference therein, or \cite{vandervaart1998}, page 61). This is all the more surprising since the accuracy of $\w \eta$ estimating $\eta_0$ does not matter provided its consistency.

A paradigm that encompasses the previous facts can be developed \textit{via} the stochastic equicontinuity of the underlying empirical process. Define the process $\eta \mapsto \mathbb Z_n (\eta)= \sqrt n(\w \theta(\eta) - \theta_0) $ and assume that it lies in $\ell^{\infty}(\mathcal H)$, the space of bounded $\R^p$-valued functions defined on $\mathcal H$. It is stochastically equicontinuous on $\mathcal H$ if for any $\epsilon>0$,
\begin{align}\label{defrhoequicontinuity}
\lim_{\delta\r 0} \limsup _{n\r +\infty }\ P \big( \ \sup_{ \|\eta_1-\eta_2\|<\delta  } |\mathbb Z_n(\eta_1)- \mathbb Z_n(\eta_2) ) | >\epsilon \  \big) = 0,
\end{align}
where $|\cdot |$ stands for the Euclidean norm. Clearly, $\w \theta(\w \eta)$ is efficient whenever $\sqrt n (\w \theta(\w \eta)-\w\theta(\eta_0))=\mathbb Z_n(\w \eta)- \mathbb Z_n(\eta_0)$ goes to $0$ in probability. This holds true if, in addition to (\ref{defrhoequicontinuity}), we have that
\begin{align}\label{statementefficiency}
P(\w\eta \in \mathcal H)\lr 1 \qquad \t{and} \qquad \|\w  \eta- \eta_0\|\overset{P}{\lr} 0.
\end{align}
In other words, stochastic equicontinuity allows for a ``no rate" condition on $\w \eta$ given in (\ref{statementefficiency}). In fact, conditions (\ref{defrhoequicontinuity}) and (\ref{statementefficiency}) represent a trade-off we need to accomplish when selecting the norm $\|\cdot\|$. When one prefers to have $\|\cdot\|$ as weak as possible in order to prove (\ref{statementefficiency}), one needs the metric to be strong enough so that (\ref{defrhoequicontinuity}) can hold. In many statistical problems, one has
\begin{align*}
n ^{1/2}(\w \theta(\eta) - \theta_0) = \mathbb G _n \varphi _ {\eta}+ o_P(1),
\end{align*}
where the $o_P(1)$ is uniform over $\eta\in\mathcal H$ and $\varphi_{\eta}$ is a measurable function often called the \textit{influence function}. This asymptotic decomposition permits to use empirical process theory and, in particular, the Donsker property in order to show (\ref{defrhoequicontinuity}) with $\|\cdot\|$ being the $L_2(P)$-norm. As it is summarized in \cite{wellner1996}, sufficient conditions deal with the metric entropy of the class of functions $\{\varphi_\eta\ :\  \eta\in \mathcal H\}$ that needs to be small enough. 


The main purpose of the paper is to establish general conditions for the efficiency of $Z$-estimators for which the objective function depends on some $\eta\in \mathcal H$. More formally, we consider $\theta_0$ and $\w \theta(\eta)$ defined, respectively, as ``zeros" of the maps 
\begin{align*}
\theta\mapsto P\psi_\eta(\theta) \qquad \t{and}\qquad  \theta \mapsto \mathbb P_n \psi_\eta(\theta) ,
\end{align*}  
where for each $\theta\in \Theta$ and $\eta\in \mathcal H$, $\psi_\eta(\theta)$ is an $\R^p$-valued measurable map defined on $\mathcal Z$. Since for every $\eta\in \mathcal H$, $P\psi_\eta(\theta_0)=0$, we have several (possibly infinitely many) equations that characterize $\theta_0$. Hence $\eta$ can better be understood as a tuning parameter rather than as a classical nuisance parameter: $\eta$ has no effect on the consistency of $\w \theta(\eta)$ but it influences the asymptotic variance. In \cite{newey1994}, similar semiparametric estimators are studied using pathwise derivatives along sub-models and the author underlines that, for such models, ``different nonparametric estimators of the same functions should result in the same asymptotic variance" \citep[page 1356]{newey1994}. In \cite{andrews1994}, the previous statement is formally demonstrated by relying on stochastic equicontinuity, as detailed in (\ref{defrhoequicontinuity}) and (\ref{statementefficiency}). In this paper, we provide new conditions on the map $(\theta,\eta)\mapsto \psi_\eta(\theta)$ and the estimators $\w \eta$ under which the following statement holds: $\w \theta(\w \eta)$ has the same asymptotic law as $\w \theta(\eta_0)$. Despite considering slightly less general estimators than in \cite{andrews1994}, our approach alleviates the regularity conditions imposed on the map $\theta\mapsto \psi_\eta(\theta)$. They are replaced by weaker regularity conditions dealing with the map $\theta\mapsto P\psi_\eta(\theta)$. Moreover, we focus on  \textit{conditional moment restrictions models} in which $\eta$ is a weight function. In this context, our approach results in a very simple condition on the metric entropy generated by $\w \eta$, which is shown to be satisfied for classical Nadaraya-Watson estimators.


Our study is based on the weak convergence of $\{ \sqrt n  ( \w \theta(\eta)-\theta_0)\}_{\eta\in \mathcal H} $ as a stochastic process belonging to $\ell^\infty (\mathcal H) $. To the best of our knowledge, this result is new. The tools we use in the proofs are reminiscent of the $Z$-estimation literature for which we mention several relevant contributions. In the case where $\theta_0$ is Euclidean, asymptotic normality is obtained in \cite{huber1967}. Nonsmooth objective functions are considered in \cite{pollard1985}. In the case where $\theta_0$ is infinite dimensional, weak convergence is established in \cite{vandervaart1995}. The presence of a nuisance parameter with possibly, slower than root $n$ rates of convergence, is studied in \cite{newey1994}. Nonsmooth objective functions are investigated in \cite{vankeilegom2003}. Relevant summaries might be found in the books \cite{newey1994book}, \cite{wellner1996}, \cite{vandervaart1998}. 


Among the different applications we have in mind, we focus on \textit{weighted regression} for \textit{heteroscedastic models}. Even though this topic is quite well understood (see among others, \cite{robinson1987}, \cite{carroll1988} and the references therein), it represents an interesting example to apply our results. In \textit{linear regression}, given the data $(Y_i,X_i)_{i=1,\ldots,n}$, where, for each $i=1\ldots, n$, $Y_i\in \R$ is the response and $X_i\in \R^q$ are the covariates, it consists of computing
\begin{align}\label{defbetaweighted}
\w\beta(w) =\argmin_{\beta}  \sum_{i=1}^n (Y_i-\beta^TX_i ) ^2w(X_i),
\end{align}
where $w$ denotes a real valued function. Among such a collection of estimators, there exists an efficient member $\w \theta(w_0)$ (see section \ref{s31} for details). Many studies have focused on the estimation of $w_0$. For instance, \cite{carroll1982} argues that a parametric estimation of $w_0$ can be performed, and \cite{carroll1982kernel} and \cite{robinson1987} use different nonparametric estimators to approximate $w_0$.
The estimators $\w\beta (\w w)$ are shown to be efficient by relying on $U$-statistics-based decompositions. It involves relatively long and peculiar calculations that depend on both $\w w$ and the loss function.
Our approach overpass this issue by providing high-level conditions on $\w w$ that are in some ways independent from the rest of the problem. In summary we require that $\w w(x)\rightarrow w_0(x)  $ in probability, $dP(x)$-almost everywhere, and that there exists a function space $\mathcal W$ such that  
\begin{align*}
P(\w w\in \mathcal W)\r 1\qquad \text{and}\qquad\int_0^{+\infty}  \sqrt {\log\mathcal N_{[\ ]} \big(\epsilon , \mathcal W, L_r(P)\big )} d\epsilon <+\infty ,
\end{align*}
for some $r>2$, where $\mathcal N_{[\ ]}$ denotes the bracketing numbers as defined in \cite{wellner1996}. When $w_0$ is modelled parametrically, the previous conditions are fairly easy to verify. For nonparametric estimators of $w_0$, in particular for Nadaraya-Watson estimators, smoothness restrictions on $\mathcal W$ with respect to the dimension $q$ allows to obtain sufficiently sharp bounds on the bracketing number of $\mathcal W$.

The paper is organised as follows. We describe in Section \ref{sexamples} several examples of estimators for which the efficiency might follow from our approach. Section \ref{s2} contains the theoretical background of the paper. We study the consistency and the weak convergence of $\w \theta(\eta)$, $\eta\in\mathcal H$. Based on this, we obtain conditions for the efficiency of $\w \theta(\w \eta)$. At the end of Section \ref{s2}, we consider weighted estimators for conditional moment restrictions models. In section \ref{s3}, we are concerned about the estimation of the optimal weight function $w_0$ in weighted linear regression. We investigate different approaches from the parametric to the fully nonparametric. In Section \ref{s4}, we evaluate the finite sample performance of several methods by means of simulations.

\section{Examples}\label{sexamples}

As discussed in the introduction, the results of the paper allow to obtain the efficiency of estimators that depend on a tuning parameter. This occurs at different levels of statistical theory. We raise several examples in the following.

\begin{example}{(Least-square constrained estimation) }\normalfont\label{exampleLSCE}
Given $\w \theta$ an arbitrary but consistent estimator of $ \theta_0$, the estimator $\w \theta_c$ is said to be a least-square constraint estimator if it minimizes $ (\theta-\w \theta)^T \Gamma (\theta-\w \theta)$ over $\theta\in \Theta$, where $\Gamma$ is lying over the set of symmetric positive definite matrices such that $\Gamma\geq b>0$. Consequently $\w \theta_c$ depends on the choice of $\Gamma$ but since $|\w \theta_c-\w\theta|^2\leq b^{-1} | \Gamma^{1/2} (\w \theta_c-\w\theta)|^2\leq b^{-1} | \Gamma^{1/2} (\theta_0-\w\theta)|^2\lr 0$ in probability, the matrix $\Gamma$ does not affect the consistency of $\w \theta_c$ estimating $\theta_0$. It is well known that $\w\theta_c$ is efficient whenever $\Gamma$ equals the inverse of the asymptotic variance of $\w \theta$ \citep[Section 5.2]{newey1994book}. Such a class is popular among econometricians and known as \textit{minimal distance estimator}.

\end{example}

In the above illustrative example, the use of the asymptotic equicontinuity of the process $\Gamma\mapsto \sqrt n (\w \theta_c-\theta_0)$ is not really legitimate since we could obtain the efficiency using more basic tools such as the Slutsky's lemma in Euclidean space. This is due of course to the Euclideanity of $\theta$ and $\Gamma$ but also to the simplicity of the map $(\theta,\Gamma)\mapsto (\theta-\w \theta)^T \Gamma (\theta-\w \theta)$. Consequently, we highlight below more evolved examples in which either the tuning parameter is a function (Examples \ref{exampleWLS}, \ref{exampleIV} and \ref{exampleSDR}) or the dependence structure between $\theta$ and $\eta$ is complicated (Example \ref{exampleHUBER}). To our knowledge, the efficiency of the examples below is quite difficult to obtain.

\begin{example}{(weighted regression) }\normalfont\label{exampleWLS}
This includes estimators described by (\ref{defbetaweighted}) but other losses than the square function might be used to account for the distribution of the noise. Examples are $L_p$-losses, Huber robust loss (see Example \ref{exampleHUBER} for details), least-absolute deviation and quantile losses. In a general framework covering every of the latter examples, a formula of the optimal weight function is established in \cite{bates1993}. 
\end{example}

\begin{example}{(Huber cut-off) }\normalfont \label{exampleHUBER}
 Whereas weighted regression handles heteroscedasticity in the data, the cut-off in Huber regression carries out the adaptation to the distribution of the noise \citep{huber1967}. The Huber objective function is the continuous function that coincides with the identity on $[-c,c]$ ($c$ is called the cut-off) and is constant elsewhere. A $Z$-estimator based on this function permits to handle heavy tails in the distribution of the noise. The choice of the cut-off might be done according to the minimization of the asymptotic variance.
\end{example}

\begin{example}{(instrumental variable)}\normalfont \label{exampleIV}
In \cite{newey1990}, the class of \textit{nonlinear instrumental variables} is defined through the \textit{generalized method of moment}. The estimator $\w\theta$ depends on a so-called \textit{matrix of instruments} $W$, and satisfies the equation $\sum_{i=1}^n W(\widetilde Z_i)\varphi(Z_i,\theta)$ $=0 $, where each $\widetilde Z_i$ is some set of coordinates of $Z_i$ and $\varphi$ is a given function. 
A formula for the optimal matrix of instruments is available. 
 \end{example}

\begin{example}{(dimension reduction) }\normalfont \label{exampleSDR}
\cite{li1991} introduced \textit{sliced inverse regression} in which the vector $EX\psi(Y)$, when $\psi$ varies, describes a subspace of interest. A minimization of the asymptotic variance leads to an optimal $\psi_0$ that can be estimated \citep{portier2013}.
\end{example}

\section{Uniform \texorpdfstring{$Z$}{%
a }%
-estimation theory}\label{s2}

Define $(\mathcal Z_\infty, \mathcal A_\infty, P_\infty)$ as the probability space associated to the whole sequence $(Z_1,Z_2,\ldots)$. Random elements in $\ell^{\infty}(\mathcal H)$, such as $\eta\mapsto \mathbb G_n\psi_\eta(\theta)$, are not necessarily measurable. To account for this, we introduce the outer probability $P_\infty^o$  (see the introduction of \cite{wellner1996} for the definition). Each convergence: in probability or in distribution, will be stated with respect to the outer probability. 
A class of function $\mathcal F$ is said to be Glivenko-Cantelli if $\sup_{f\in \mathcal F} |(\P_n-P)f|$ goes to $0$ in $P_\infty^o$-probability. A class of function $\mathcal F$ is said to be Donsker if $\mathbb G_n f$ converges weakly in $\ell^{\infty}(\mathcal F)$ to a tight measurable element. Let $d $ denote the $L_2(P)$-distance given by $d(f,g) = \sqrt {P(f-g)^2}$. A class $\mathcal F$ is Donsker if and only if it is totally bounded with respect to $d$ and, for every $\epsilon>0$,
\begin{align}\label{defrhoequicontinuitydonsker}
\lim_{\delta\r 0} \limsup _{n\r +\infty } P_\infty^o \big( \sup_{d(f,g)<\delta  } |\mathbb G_n (f-g) | >\epsilon   \big) = 0.
\end{align}
The previous assertion follows from the characterization of tight sequences valued in the space of bounded functions \citep[Theorem 1.5.7]{wellner1996}. We refer to the book \cite{wellner1996} for a comprehensive study of the latter concepts.

For the sake of generality, we authorize the parameter of interest $\theta_0$ being a function of $\eta$. Hence we further assume that $\theta_0(\cdot)$ is an element of $\ell^{\infty}(\mathcal H)$.

\subsection{Unifrom consistency}\label{s21}

Before being possibly expressed as a $Z$-estimator, the parameter of interest $\theta_0$ is often defined as an $M$-estimator, i.e., $\theta_0\in \ell^\infty(\mathcal H)$ is such that
\begin{align}\label{formulatrueparameterMestimation}
\theta_0(\eta)= \argmin_{\theta\in \Theta} Pm_{\eta}(\theta),
\end{align}
where $m_{\eta}(\theta):\mathcal Z\r \R$ is a known real valued measurable function, for every $\theta\in \Theta$ and each $\eta \in \mathcal H$. The estimator of $\theta_0$ is noted $\w \theta$, it depends on $\eta$ since it satisfies
\begin{align}\label{formulaestimatorMestimation}
\w \theta (\eta) =\argmin _ {\theta\in \Theta} \P_n m_{\eta}( \theta).
\end{align}
Both elements $\theta_0$ and $\w\theta$ are $\R^p$-valued functions defined on $\mathcal H$. When dealing with consistency, treating $M$-estimators is more general but not more difficult than $Z$-estimators (see Remark \ref{remarkconsistencyZestimator}). This generalizes standard consistency theorems for $M$-estimators \citep[Theorem 5.7]{vandervaart1998} to uniform consistency results with respect to the objective function.

\begin{theorem}\label{thconsistencyuniform}
Assume that (\ref{formulatrueparameterMestimation}) and (\ref{formulaestimatorMestimation}) hold. If 
\begin{enumerate}[(\text{a}1)]
\item\label{condGC} $\sup_{\eta\in \mathcal H,\ \theta\in \Theta}\ |(\P_n-P)m_{\eta}(\theta)|_{}\overset{P_\infty^o}{\lr} 0$,
\item \label{condidentifiable} $\sup_{\eta\in \mathcal H}|\theta(\eta)- \theta_0(\eta)|\geq \delta >0 \quad \Rightarrow \quad \sup_{\eta\in \mathcal H}P\{m_{\eta}( \theta (\eta) )-m_{\eta}( \theta_0 (\eta) )\} \geq \epsilon >0$,
\end{enumerate} 
then we have that $\sup _{\eta\in \mathcal H} |\w \theta(\eta)-\theta_{0}(\eta)| \overset{P_\infty^o}{\lr} 0$. 
\end{theorem}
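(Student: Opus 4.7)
The plan is to follow the classical Argmin/Argmax structure for M-estimator consistency (see e.g.\ \cite{vandervaart1998}, Theorem~5.7), but carrying all suprema over $\eta\in\mathcal H$ from the start.

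First, I would exploit the defining inequality (\ref{formulaestimatorMestimation}): for every $\eta\in\mathcal H$, $\P_n m_\eta(\w \theta(\eta)) \le \P_n m_\eta(\theta_0(\eta))$. Writing $\P_n = P + (\P_n-P)$ and using that $\theta_0(\eta)$ minimises $Pm_\eta(\cdot)$ by (\ref{formulatrueparameterMestimation}) yields the deterministic two-sided bound
\begin{align*}
0 \le Pm_\eta(\w \theta(\eta)) - Pm_\eta(\theta_0(\eta)) \le (\P_n-P)\{m_\eta(\theta_0(\eta)) - m_\eta(\w \theta(\eta))\} \le 2\sup_{\eta'\in\mathcal H,\,\theta\in\Theta}|(\P_n-P)m_{\eta'}(\theta)|.
\end{align*}
The right-hand side no longer depends on $\eta$, so taking $\sup_\eta$ on the left and invoking (a\ref{condGC}) gives
\begin{align*}
\sup_{\eta\in\mathcal H}\{Pm_\eta(\w \theta(\eta)) - Pm_\eta(\theta_0(\eta))\} \overset{P_\infty^o}{\lr} 0.
\end{align*}

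Second, I would apply the contrapositive of (a\ref{condidentifiable}) to the random element $\w \theta\in\ell^\infty(\mathcal H)$. Fixing $\delta>0$ and letting $\epsilon>0$ be the associated well-separation constant,
\begin{align*}
\Big\{\sup_{\eta\in\mathcal H}|\w \theta(\eta)-\theta_0(\eta)|\ge\delta\Big\} \subset \Big\{\sup_{\eta\in\mathcal H}P\{m_\eta(\w \theta(\eta))-m_\eta(\theta_0(\eta))\}\ge \epsilon\Big\},
\end{align*}
and by the previous display the outer probability of the right-hand event tends to $0$, which delivers the claimed uniform consistency.

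The main subtlety, and the reason the conclusion is phrased in terms of outer probability $P_\infty^o$, is measurability: neither $\sup_\eta|\w \theta(\eta)-\theta_0(\eta)|$ nor the pointwise empirical-process increment evaluated at the random $\w \theta(\eta)$ is a priori measurable in this generic setting. This is precisely why, in the first step, one must dominate the increment by the full supremum appearing in (a\ref{condGC}) rather than try to evaluate it at the random point. Once this is arranged the argument is essentially mechanical; the genuine use of $\ell^\infty(\mathcal H)$-valued stochastic processes is only needed for the weak-convergence results later in Section~\ref{s2}.
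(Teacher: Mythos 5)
Your proposal is correct and follows essentially the same route as the paper's proof: the same decomposition $P\{m_\eta(\w\theta(\eta))-m_\eta(\theta_0(\eta))\}\le 2\sup_{\eta,\theta}|(\P_n-P)m_\eta(\theta)|$ obtained from the defining inequality $\P_n m_\eta(\w\theta(\eta))\le\P_n m_\eta(\theta_0(\eta))$, combined with the contrapositive of (a\ref{condidentifiable}). The only difference is the order of the two steps, which is immaterial.
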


\begin{proof}
We can follow the lines of the proof of Theorem 5.7 in \cite{vandervaart1998}. Given $\delta >0$, Assumption (a\ref{condidentifiable}) implies that there exists $\epsilon>0$ such that
\begin{align*}
P_\infty^o\left(\sup_{\eta\in \mathcal H}|\w \theta(\eta)-\theta_0(\eta)|\geq \delta\right) 
&\leq P_\infty^o\left(\sup_{\eta\in \mathcal H}P\{m_{\eta}( \w \theta (\eta) )-m_{\eta}( \theta_0 (\eta) )\} \geq \epsilon \right).
\end{align*}
 By definition, $\P_n \{ m_{\eta}( \w \theta (\eta) )- m_{\eta}(  \theta_0 (\eta) ) \}\leq 0$ for every $\eta\in \mathcal H$, then we know that
\begin{align*}
&P\{m_{\eta}( \w \theta (\eta) )-m_{\eta}( \theta_0 (\eta) )\} \\
&=  (P-\P_n) m_{\eta}( \w \theta (\eta) ) + (\P_n-P) m_{\eta}( \theta_0 (\eta) ) +\P_n\{ m_{\eta}( \w \theta (\eta) )-m_{\eta}(  \theta_0 (\eta) )\}\\
&\leq (P-\P_n) m_{\eta}( \w \theta (\eta) ) + (\P_n-P) m_{\eta}( \theta_0 (\eta) )\\
& \leq 2 \sup_{\theta\in \Theta,\ \eta\in \mathcal H}|(\P_n-P) m_{\eta}( \theta  ) |,
\end{align*}
that goes to $0$ in outer probability by (a\ref{condGC}).
\end{proof}

\begin{remark}\normalfont\label{remarkglivenkocantelli}
Condition (a\ref{condGC}) requires $\{m_\eta (\theta)\ : \ \theta\in \Theta,\ \eta\in \mathcal H\}$ to be Glivenko-Cantelli. It is enough to bound the uniform covering numbers or the bracketing numbers \citep[Chapter 2.4]{wellner1996}. When $\Theta$ is unbounded, the Glivenko-Cantelli property may fail. Examples include linear regression with $L_p$-losses. In such situations, one may require the optimisation set $\Theta$ to be a compact set containing the true parameter. Another possibility is to use, if available, special features of the function $\theta\mapsto m_\eta(\theta)$ such as convexity \citep[Theorem 2.7]{newey1994}. 
\end{remark}
  
\begin{remark}\normalfont\label{remarkidentifiabilitycondition}

 Condition (a\ref{condidentifiable}) is needed for the identifiability of the parameter $\theta_0$. It says that whenever $\theta(\cdot)$ is not uniformly close to $\theta_0(\cdot)$, the objective function evaluated at $\theta(\cdot)$ is not uniformly small. Consequently, every sequence of functions $\theta_n(\cdot)$ such that $\sup_{\eta\in \mathcal H}Pm_{\eta}( \theta_n (\eta) )-m_{\eta}( \theta_0 (\eta) )\r 0 $ as $n\r+\infty$, converges uniformly to $\theta_0(\cdot)$. It is a functional version of the so called ``well-separated maximum" \citep[page 244]{kosorok2008}. It is stronger but often more convenient to verify
 \begin{enumerate}[(\text{a}1')]\label{condidentifiable2}\setcounter{enumi}{1}
\item  $\inf_{\eta\in \mathcal H} \inf_{|\theta-\theta_0(\eta)|\geq \delta} P\{m_\eta(\theta)-m_\eta(\theta_0(\eta))\}>0$,
\end{enumerate}
for every $\delta>0$. This resembles to Van der Vaart's consistency conditions in Theorem 5.9 of \cite{vandervaart1998}. To show this, suppose that $\sup_{\eta\in \mathcal H}|\theta(\eta)-\theta_0(\eta)|\geq 2 \delta$ and write 
 \begin{align*}
 P\{m_{\eta}( \theta (\eta) )-m_{\eta}( \theta_0 (\eta) )\} &\geq \mathds 1 _{\{|\theta(\eta)-\theta_0(\eta)|\geq \delta  \}} P\{m_{\eta}( \theta (\eta) )-m_{\eta}( \theta_0 (\eta) )\}\\
 &\geq 
 \mathds 1 _{\{|\theta(\eta)-\theta_0(\eta)|\geq \delta   \}}  \inf_{|\theta-\theta_0(\eta)|\geq \delta }P\{m_{\eta}( \theta )-m_{\eta}( \theta_0 (\eta) )\}\\
 &\geq \mathds 1 _{\{|\theta(\eta)-\theta_0(\eta)|\geq \delta  \}} \inf_{\eta\in \mathcal H} \inf_{|\theta-\theta_0(\eta)|\geq \delta }P\{m_{\eta}( \theta )-m_{\eta}( \theta_0 (\eta) )\}.
 \end{align*}
Conclude by taking the supremum over $\mathcal H$ in both side.
\end{remark}

\begin{remark}
\normalfont \label{remarkconsistencyZestimator}
Estimators defined through zeros of the map $\P_n\psi_{\eta}(\theta)$ are also minimizers of $|\P_n\psi_{\eta}(\theta)|$. Therefore they can be handle by Theorem \ref{thconsistencyuniform}. Let $\theta_0(\cdot)$ be such that $P\psi_{\eta}(\theta_0(\eta))=0 $ for every $\eta\in \mathcal H$. If (a\ref{condGC}) holds replacing $m$ by $\psi$ and if 
\begin{align*}
\sup_{\eta\in \mathcal H}|\theta(\eta)- \theta_0(\eta)|\geq \delta >0 \quad \Rightarrow \quad \sup_{\eta\in \mathcal H} |P\psi_{\eta}( \theta (\eta) )| \geq \epsilon >0,
\end{align*}
then the uniform convergence of zeros of $\P_n\psi_{\eta}(\theta)$ to the zero of $P\psi_{\eta}(\theta)$ holds. Given that $m_\eta$ is differentiable, $M$-estimators can be expressed as $Z$-estimators with $  \nabla_{\theta} m_\eta $ as objective function. Because any function can have several local minimums, the previous condition with $\nabla_{\theta}m_\eta$ is stronger than (a\ref{condidentifiable}). Consequently, for consistency purpose, $M$-estimators should not be expressed in terms of $Z$-estimators, see also \cite{newey1994}, page 2117.
\end{remark}

\subsection{Weak convergence}\label{s22}

We now consider the weak convergence properties of $Z$-estimators indexed by the objective functions. We assume further that $\theta_0\in \ell^{\infty}(\mathcal H)$ is such that for each $\eta\in \mathcal H$, $\theta_0(\eta)$ satisfies the $p$-dimensional set of equations
\begin{align}\label{formulatrueparameter}
P\psi_{\eta}(\theta_0(\eta))=0,
\end{align}
where $\psi_{\eta}(\theta):\mathcal Z\r \R^p$ is a known measurable function. The estimator of $\theta_0(\cdot)$ is noted $\w \theta(\cdot)$ and for each $\eta$, it is assumed that
\begin{align}\label{formulaestimator}
\P_n\psi_{\eta}(\w \theta(\eta))=0.
\end{align}
Here we shall suppose that $\sup_{\eta\in \mathcal H} |\w\theta(\eta)-\theta_0(\eta)|=o_{P_\infty^o}(1)$, so that the function $\psi_\eta$ is not intended to satisfy (a\ref{condidentifiable}). Indeed consistency may have been established from other restrictions such as minimum argument (see Remark \ref{remarkconsistencyZestimator}).

We require the (uniform) Frechet differentiability of the map $\theta\mapsto P\psi_{\eta}(\theta)$, that is, there exists $A_\eta:\Theta \mapsto  \R^{p\times p}$ such that
\begin{align}\label{eq:uniformfrechetdiff}
P\psi_{\eta}(\theta)-P\psi_{\eta}(\widetilde \theta)-A_\eta(\theta)(\theta -\widetilde \theta) = o(\theta -\widetilde \theta),
\end{align} 
where the $o(\theta -\widetilde \theta)$ does not depend on $\eta$. 

\begin{theorem}\label{theoremweakconvZestimator}

Assume that (\ref{formulatrueparameter}) and (\ref{formulaestimator}) hold. If
\begin{enumerate}[(\text{a}1)]  \setcounter{enumi}{2}
\item \label{condconsistency} $\sup_{\eta \in\mathcal H} |\w \theta(\eta)-\theta_0(\eta)|\overset{P_\infty^o}{\lr}0  $,
\item \label{condl2continuity} for every $\epsilon>0$, $\exists\delta>0$ such that $|\theta-\widetilde \theta|\leq  \delta$ implies that $\sup_{\eta\in \mathcal H}  d(\psi_\eta(\theta) ,\psi_\eta(\widetilde \theta)) \leq \epsilon $,
\item\label{conddonsker}  there exists $\delta>0$ such that the class $\Psi =\{z\mapsto  \psi_\eta(\theta)(z) \ :\  |\theta-\theta_0|<\delta,\ \eta\in \mathcal H\}$ is $P$-Donsker,
\item \label{condinvertible} the matrix $B_\eta:= A_\eta(\theta_0(\eta))$ defined in (\ref{eq:uniformfrechetdiff}), is bounded and invertible, uniformly in $\eta$,
\end{enumerate}
then we have that
\begin{align*}
 n^{1/2} (\w \theta(\eta)-\theta_0(\eta))  = - B_\eta ^{-1}\mathbb G_n \psi_{\eta}( \theta_0(\eta)) +o_{P_\infty^o}(1).
\end{align*}
Consequently, $\sqrt n (\w \theta(\eta)-\theta_0(\eta))$ converges weakly to a tight zero-mean Gaussian element in $\ell^\infty (\mathcal H)$ whose covariance function is given by $(\eta_1,\eta_2)\mapsto   B_{\eta_1}^{-1} P (\psi_{\eta_1} (\theta_0(\eta_1))\psi_{\eta_2} (\theta_0(\eta_2))^T)B_{\eta_2}^{-1}$.
\end{theorem}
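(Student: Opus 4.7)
The plan is to perform a classical $Z$-estimator linearization, but uniformly in $\eta\in \mathcal H$, by expanding $\P_n\psi_\eta(\w\theta(\eta))=0$ around $\theta_0(\eta)$ and separating the empirical-process part from the Fréchet-differential part. Starting from $\P_n\psi_\eta(\w\theta(\eta))=0$ and $P\psi_\eta(\theta_0(\eta))=0$, I would write
\begin{align*}
0 \;=\; \sqrt n\,\P_n\psi_\eta(\w\theta(\eta)) \;=\; \G_n\psi_\eta(\w\theta(\eta)) \;+\; \sqrt n\, P\psi_\eta(\w\theta(\eta)),
\end{align*}
and treat the two summands separately, uniformly in $\eta$.

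For the empirical-process term, I would use the standard stochastic-equicontinuity argument: by (a\ref{conddonsker}) the class $\Psi$ is $P$-Donsker, hence asymptotically equicontinuous with respect to the $L_2(P)$-distance $d$; combined with the uniform consistency (a\ref{condconsistency}) and the uniform $L_2$-continuity in (a\ref{condl2continuity}), one has $\sup_{\eta\in\mathcal H} d(\psi_\eta(\w\theta(\eta)),\psi_\eta(\theta_0(\eta)))\overset{P_\infty^o}{\to}0$, which yields $\sup_{\eta\in\mathcal H}|\G_n\psi_\eta(\w\theta(\eta))-\G_n\psi_\eta(\theta_0(\eta))|=o_{P_\infty^o}(1)$. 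For the second term, the uniform Fréchet expansion (\ref{eq:uniformfrechetdiff}), together with $P\psi_\eta(\theta_0(\eta))=0$, gives
\begin{align*}
\sqrt n\,P\psi_\eta(\w\theta(\eta)) \;=\; B_\eta\,\sqrt n(\w\theta(\eta)-\theta_0(\eta)) \;+\; \sqrt n\, o(|\w\theta(\eta)-\theta_0(\eta)|),
\end{align*}
with the little-$o$ uniform in $\eta$ by hypothesis.

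Combining the two displays yields, uniformly in $\eta$,
\begin{align*}
B_\eta\,\sqrt n(\w\theta(\eta)-\theta_0(\eta)) \;=\; -\G_n\psi_\eta(\theta_0(\eta)) \;+\; o_{P_\infty^o}(1) \;+\; \sqrt n\,o(|\w\theta(\eta)-\theta_0(\eta)|).
\end{align*}
The main delicate point is the last remainder: to convert it into a genuine $o_{P_\infty^o}(1)$ I would run the usual bootstrap argument. By (a\ref{conddonsker}) the class $\{\psi_\eta(\theta_0(\eta)):\eta\in\mathcal H\}\subset \Psi$ is $P$-Donsker, so $\sup_\eta |\G_n\psi_\eta(\theta_0(\eta))|=O_{P_\infty^o}(1)$; combined with the uniform boundedness of $B_\eta^{-1}$ from (a\ref{condinvertible}), the display above implies $\sup_\eta \sqrt n|\w\theta(\eta)-\theta_0(\eta)|=O_{P_\infty^o}(1)$, after absorbing the $o(1)\cdot \sqrt n|\w\theta(\eta)-\theta_0(\eta)|$ term into the left-hand side. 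Once uniform $\sqrt n$-boundedness is in hand, the remainder becomes $o_{P_\infty^o}(1)$, and multiplying by $B_\eta^{-1}$ gives the stated representation.

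Finally, weak convergence in $\ell^\infty(\mathcal H)$ follows from the Donsker property of $\{\psi_\eta(\theta_0(\eta)):\eta\in\mathcal H\}$ (a sub-class of $\Psi$, hence Donsker) and the fact that $\eta\mapsto B_\eta^{-1}$ is a uniformly bounded family of linear maps, so $\eta\mapsto -B_\eta^{-1}\G_n\psi_\eta(\theta_0(\eta))$ is tight and converges weakly to the Gaussian process obtained by applying $-B_\eta^{-1}$ pointwise to the Donsker limit; the covariance formula then comes directly from computing $\E[B_{\eta_1}^{-1}\psi_{\eta_1}(\theta_0(\eta_1))\psi_{\eta_2}(\theta_0(\eta_2))^T B_{\eta_2}^{-1}]$ on the Gaussian limit. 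I expect the main obstacle to be the bootstrap step that upgrades the Fréchet remainder from $o(|\w\theta-\theta_0|)$ to $o_{P_\infty^o}(n^{-1/2})$ uniformly in $\eta$; every other step is a direct combination of the hypotheses with Theorem 1.5.7 / Donsker machinery from \cite{wellner1996}.
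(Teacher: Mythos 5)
Your proposal is correct and follows essentially the same route as the paper: the same decomposition of $\sqrt n\,\P_n\psi_\eta(\w\theta(\eta))$ into an empirical-process increment handled by Donsker asymptotic equicontinuity (via (a\ref{condconsistency}) and (a\ref{condl2continuity})) plus a deterministic term handled by the uniform Fr\'echet expansion, followed by the same rate-bootstrap step that uses $\sup_\eta|\mathbb G_n\psi_\eta(\theta_0(\eta))|=O_{P_\infty^o}(1)$ and the uniform invertibility of $B_\eta$ to absorb the remainder and obtain uniform $\sqrt n$-consistency. No gaps; the concluding weak-convergence step via the Donsker sub-class and uniform boundedness of $B_\eta^{-1}$ also matches the paper.
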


\begin{proof}
We follow a standard approach by first deriving the (uniform) rates of convergence and second computing the asymptotic distribution \citep[Theorem 5.21]{vandervaart1998}. Thanks to (a\ref{condconsistency}) and (a\ref{condl2continuity}), we know that there exists a nonrandom positive sequence $\delta_n\r 0$, such that the sets 
\begin{align*}
\{\sup_{\eta \in\mathcal H} |\w \theta(\eta)-\theta_0(\eta)|<\delta_n \}\quad  \t{and}\quad  \{\sup_{\eta \in\mathcal H} d(\psi_{\eta}(\w \theta(\eta)),\psi_{\eta}(\theta(\eta))) <\delta_n \},
\end{align*}
have probability going to $1$. Without loss of generality, we assume in the proof that these events are realized. By definition of $\w\theta(\eta)$, we have
\begin{align}
o_{P_\infty^o}(1) &= n^{1/2}\{\P_n\psi_{\eta}(\w \theta) - P\psi_{\eta}(\theta_0(\eta))\}\nonumber \\
& = \mathbb G_n \{\psi_{\eta}(\w \theta(\eta))-\psi_{\eta}(\theta_0(\eta))\} +\mathbb G_n \psi_{\eta}( \theta_0(\eta)) +n^{1/2}P\{ \psi_{\eta}(\w \theta(\eta))-\psi_{\eta}( \theta_0(\eta))\}\label{equationdecompostionZestimator}.
\end{align}
The first term is treated as follows. For every $\eta\in \mathcal H$, we have that $(\psi_{\eta}(\w \theta(\eta)),\psi_{\eta}(\theta_0(\eta)))\in{\mathcal D} (\delta_n) $ with
\begin{align*}
{\mathcal D} (\delta) = \{(\psi,\widetilde \psi)\in \Psi \times \Psi\  : \   d(\psi,\widetilde \psi)< \delta\}.
\end{align*}
It follows that
\begin{align*}
 |\mathbb G_n (\psi_{\eta}(\w \theta(\eta))-\psi_{\eta}(\theta_0(\eta)))|\leq \sup_{(\psi,\widetilde \psi)\in \mathcal D( \delta_n)} |\mathbb G_n (\psi-\widetilde \psi)| .
\end{align*}  
Now using (a\ref{conddonsker}) and equation (\ref{defrhoequicontinuitydonsker}), the first term of (\ref{equationdecompostionZestimator}) goes to $0$ in $P_\infty^o$-probability. As a consequence, we know that $\mathbb G_n \psi_{\eta}( \theta_0(\eta)) +\sqrt n P\{ \psi_{\eta}(\w \theta(\eta))-\psi_{\eta}( \theta_0(\eta))\}= o_{P_\infty^o}(1)$ or, equivalently, that
\begin{align}\label{formulamestimationcentral}
\mathbb G_n \psi_{\eta}( \theta_0(\eta)) +B_\eta n^{1/2} (\w \theta(\eta)-\theta_0(\eta))
 = a_n(\eta) + o_{P_\infty^o}(1),
\end{align}
with $a_n (\eta)= -\sqrt n \{ P\{ \psi_{\eta}(\w \theta(\eta))-\psi_{\eta}( \theta_0(\eta))\}-B_\eta n^{1/2} (\w \theta(\eta)-\theta_0(\eta))\}$. Using (a\ref{condinvertible}), we have that 
\begin{align}\label{equationfrechetconsequence}
 a_n(\eta) &\leq   |n ^{1/2}(\w \theta(\eta)-\theta_0(\eta))| \sup_{|\theta_1-\theta_2|<\delta_n} \frac{|P\psi_{\eta}(\theta)-P\psi_{\eta}(\widetilde \theta)-A_\eta(\theta)(\theta -\widetilde \theta) |}{|\theta-\widetilde \theta|}\nonumber \\
& \leq o(1) \sup_{\eta\in \mathcal H} |n^{1/2} (\w \theta(\eta)-\theta_0(\eta))|.
\end{align} 
Then, by (a\ref{conddonsker}), $\sup_{\eta\in \mathcal H}|\mathbb G_n \psi_{\eta}( \theta_0(\eta))|=O_{P_\infty^o}(1)$, and using (a\ref{condinvertible}) again, in particular the full rank condition on $B_\eta$, we get
\begin{align*}
 |n^{1/2}(\w \theta(\eta) -\theta_0(\eta))|& \leq  |B_\eta n^{1/2}(\w \theta(\eta) -\theta_0(\eta))| \sup_{|u|=1} |B_\eta^{-1}u|  = O_{P_\infty^o}(1).
\end{align*}
Bringing the previous information in equation (\ref{equationfrechetconsequence}) gives that $ a_n(\eta) =o_{P_\infty^o}(1)$, therefore by equation (\ref{formulamestimationcentral}), we get
\begin{align*}
\mathbb G_n \psi_{\eta}( \theta_0(\eta)) +B_\eta n^{1/2} (\theta(\eta)-\theta_0(\eta))  = o_{P_\infty^o}(1),
\end{align*}
and the conclusion follows.
\end{proof}

\begin{remark}\normalfont \label{rk:kato}
Weak convergence of $M$-estimators is more difficult to obtain than weak convergence of $Z$-estimators. An interesting strategy is to focus on convex objective functions as developed in \cite{pollard1985}. Unlike Theorem \ref{theoremweakconvZestimator}, this approach might include non-smooth objective functions, e.g., least-absolute deviation. More recently, \cite{kato2009} considers convex objective functions that are indexed by real parameters. The main application deals with the weak convergence of the quantile regression process. 
\end{remark}

\begin{remark}\normalfont\label{remarktrueparameterindepeta}
In all the examples given in Section \ref{sexamples}, $\theta_0$ remains fixed as $\eta$ varies. Then $\psi_\eta$, $\eta\in \mathcal H$, represents a range of criterion functions available for estimating $\theta_0$. In this context, condition (a\ref{condl2continuity}) becomes 
\begin{enumerate}[(\text{a}1')]\setcounter{enumi}{3}
\item  whenever $\theta\rightarrow  \theta_0 $, $\sup_{\eta\in \mathcal H}  d(\psi_\eta(\theta) ,\psi_\eta(\theta_0)) \r 0 $,
\end{enumerate}
and condition (a\ref{condinvertible}) is reduced to 
\begin{enumerate}[(\text{a}1')]\setcounter{enumi}{5}
\item there exists $B_\eta\in \R^{p\times p}$ invertible and bounded uniformly in $\eta$ such that
\begin{align*}
P\psi_{\eta}(\theta)-P\psi_{\eta}(\theta_0)-B_\eta(\theta -\theta_0) = o(\theta -\theta_0),
\end{align*} 
\end{enumerate}
where the $o(\theta -\theta_0)$ does not depend on $\eta$.

\end{remark}

\subsection{Efficiency}\label{s24}

In this section, Theorem \ref{theoremweakconvZestimator} is used to establish conditions for the efficiency of $\w \theta (\w \eta)$ estimating $\theta_0\in \Theta$. Hence we shall assume that for every $\eta\in \mathcal H$, $\theta_0(\eta) = \theta_0$ (as in the introduction and in Remark \ref{remarktrueparameterindepeta}). Given $\w \eta$, a consistent estimator of $\eta_0$, the next theorem asserts that, whatever the accuracy of $\w \eta$, $\w \theta(\eta_0)$ and $\w \theta(\w \eta) $ have the same asymptotic variance. 

\begin{theorem}\label{thefficiency}

Assume that (\ref{formulatrueparameter}), (\ref{formulaestimator}), (a\ref{condconsistency}), (a\ref{condl2continuity}'), (a\ref{conddonsker}) and (a\ref{condinvertible}') hold. If 
\begin{enumerate}[(\t{a}1)]\setcounter{enumi}{6}
\item \label{condefficiency1} for every $\eta\in \mathcal H$, $\theta_0(\eta) = \theta_0$,
\item \label{condefficiency3} there exists $\w \eta$ and $\eta_0\in\mathcal H$ such that $P_\infty ^o (\w \eta \in \mathcal H) \lr 1$ and $\|\w\eta-\eta_0\|\overset{P_\infty^o}{\lr} 0$, 
\item \label{condefficiency4} the quantities $d(\psi_\eta(\theta_0),\psi_{\eta_0}(\theta_0))$ and $ B_\eta -B_{\eta_0}$  goes to $0$, as soon as $\|\eta-\eta_0\|\r 0$,
\end{enumerate}
then $\w \theta(\w \eta)$ has the same asymptotic law as $\w \theta (\eta_0)$. 
\end{theorem}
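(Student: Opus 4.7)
The plan is to apply the asymptotic linearization of Theorem \ref{theoremweakconvZestimator} at both the deterministic point $\eta_0$ and the random point $\w\eta$, and then argue that the two linearizations coincide up to a negligible remainder. On the event $\{\w\eta \in \mathcal H\}$, which has outer probability tending to $1$ by (a\ref{condefficiency3}), Theorem \ref{theoremweakconvZestimator} delivers
\[
\sqrt n\bigl(\w\theta(\w\eta)-\theta_0\bigr) = -B_{\w\eta}^{-1}\mathbb G_n\psi_{\w\eta}(\theta_0)+o_{P_\infty^o}(1), \quad \sqrt n\bigl(\w\theta(\eta_0)-\theta_0\bigr) = -B_{\eta_0}^{-1}\mathbb G_n\psi_{\eta_0}(\theta_0)+o_{P_\infty^o}(1),
\]
so it suffices to show $B_{\w\eta}^{-1}\mathbb G_n\psi_{\w\eta}(\theta_0) - B_{\eta_0}^{-1}\mathbb G_n\psi_{\eta_0}(\theta_0) = o_{P_\infty^o}(1)$, after which Slutsky's lemma gives the conclusion.

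I would then split this difference into the two summands
\[
B_{\w\eta}^{-1}\bigl\{\mathbb G_n\psi_{\w\eta}(\theta_0) - \mathbb G_n\psi_{\eta_0}(\theta_0)\bigr\} + \bigl\{B_{\w\eta}^{-1} - B_{\eta_0}^{-1}\bigr\}\mathbb G_n\psi_{\eta_0}(\theta_0).
\]
The second summand is routine: (a\ref{condefficiency4}) together with the continuity of matrix inversion on the invertible matrices, and the uniform bound on $B_\eta^{-1}$ granted by (a\ref{condinvertible}'), yield $B_{\w\eta}^{-1} - B_{\eta_0}^{-1} = o_{P_\infty^o}(1)$; the Donsker property (a\ref{conddonsker}) gives $\mathbb G_n\psi_{\eta_0}(\theta_0)=O_{P_\infty^o}(1)$; and the product is $o_{P_\infty^o}(1)$.

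The main obstacle---and where the efficiency phenomenon really shows up---is the first summand, which is an empirical process evaluated at a \emph{random} index. My plan is to invoke the stochastic equicontinuity form of the Donsker property (a\ref{conddonsker}), namely (\ref{defrhoequicontinuitydonsker}). On $\{\w\eta\in\mathcal H\}$ both $\psi_{\w\eta}(\theta_0)$ and $\psi_{\eta_0}(\theta_0)$ belong to $\Psi$, and by (a\ref{condefficiency4}) the $L_2(P)$-distance $d(\psi_{\w\eta}(\theta_0),\psi_{\eta_0}(\theta_0))$ tends to $0$ in outer probability. Given $\epsilon>0$, I would select $\delta>0$ so that the $\limsup$ in (\ref{defrhoequicontinuitydonsker}) lies below $\epsilon$, then bound $P_\infty^o(|\mathbb G_n\{\psi_{\w\eta}(\theta_0)-\psi_{\eta_0}(\theta_0)\}|>\epsilon)$ by the sum of $P_\infty^o(d(\psi_{\w\eta}(\theta_0),\psi_{\eta_0}(\theta_0))\geq\delta)$---which vanishes by (a\ref{condefficiency4})---and the equicontinuity bound. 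Combined with the uniform boundedness of $B_{\w\eta}^{-1}$, this yields the required negligibility, notably without any rate of convergence for $\w\eta\to\eta_0$, which is precisely the ``no rate'' phenomenon advertised in the introduction.
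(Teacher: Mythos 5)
Your proposal is correct and follows essentially the same route as the paper: both expand via the uniform linearization of Theorem \ref{theoremweakconvZestimator} (valid at the random index $\w\eta$ on the event $\{\w\eta\in\mathcal H\}$ because the remainder is uniform over $\mathcal H$), split the difference into $B_{\w\eta}^{-1}\mathbb G_n\{\psi_{\w\eta}(\theta_0)-\psi_{\eta_0}(\theta_0)\}$ plus $(B_{\w\eta}^{-1}-B_{\eta_0}^{-1})\mathbb G_n\psi_{\eta_0}(\theta_0)$, and kill the first term by the stochastic equicontinuity (\ref{defrhoequicontinuitydonsker}) from (a\ref{conddonsker}) together with (a\ref{condefficiency3})--(a\ref{condefficiency4}), and the second by the identity $B_{\eta_0}^{-1}-B_{\w\eta}^{-1}=B_{\eta_0}^{-1}(B_{\w\eta}-B_{\eta_0})B_{\w\eta}^{-1}$ and boundedness in probability of $\mathbb G_n\psi_{\eta_0}(\theta_0)$. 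No gap.
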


\begin{proof}
Write 
\begin{align*}
\w \theta(\w \eta )- \theta_0 = (\w \theta(\w \eta) - \w \theta(\eta_0) )+(\w \theta(\eta_0)  - \theta_0),
\end{align*}
we only have to show that the first term is neglectabe, i.e., $\w \theta(\w \eta) - \w \theta(\eta_0)=o_{P_\infty^o}(n^{-1/2})$. By (a\ref{condefficiency3}) and (a\ref{condefficiency4}), we can make the proof assuming that the event
\begin{align*}
\w \eta \in \mathcal H, \quad   {d(\psi_{\w \eta}(\theta_0),\psi_{\eta_0}(\theta_0))} <\delta_n,\quad  |B_{\w\eta}-B_{\eta_0}|<\delta_n,
\end{align*}
is realized, for a certain nonrandom positive sequence $\delta_n\r 0$. Then applying Theorem \ref{theoremweakconvZestimator}, we find
\begin{align*}
 n^{1/2} (\w \theta(\w \eta) - \w \theta(\eta_0) ) &=B_{\w \eta}^{-1} \mathbb G_n\{\psi_{\eta_0}(\theta_0)-\psi_{\w \eta}(\theta_0)\}+(B_{\eta_0}^{-1}-B_{\w \eta}^{-1})\mathbb G_n\psi_{\eta_0}(\theta_0) +o_{P_\infty^{o}}(1).
\end{align*}
 To obtain the convergence in probability to $0$ of the first term, we use (a\ref{conddonsker}) to rely on the stochastic equicontinuity, as in (\ref{defrhoequicontinuitydonsker}). By (a\ref{condinvertible}'), the second term equals $B_{\eta_0}^{-1}-B_{\w \eta}^{-1}= B_{\eta_0}^{-1}(B_{\w \eta}-B_{\eta_0})B_{\w \eta}^{-1} = O(\delta_n)$ times a term that is bounded in probability.
  
\end{proof}

\subsection{Conditional moment restrictions}\label{s25}

We now consider conditional moment restrictions models given by
\begin{align}\label{formulaconditionalmoment}
E(\varphi(Z,\beta_0)|X)=0, 
\end{align}
where $ X\in \mathcal X$ and $Z\in \mathcal Z$ are random variables with joint law $P$ and $\varphi$ is a known $\R^p$-valued function. The conditional restriction (\ref{formulaconditionalmoment}) implies that infinitely many (unconditional) equations are available to characterize $\beta_0$, that is, for every bounded measurable function $W$ defined on $\mathcal X$, one has
\begin{align*}
E(W(X) \varphi(Z,\beta_0)) =0.
\end{align*}
Assume that the sequence $(Z_i,X_i)_{1,\leq i\leq n}$ is independent and identically distributed from model (\ref{formulaconditionalmoment}). The estimator $\w\beta(W)$ satisfies
\begin{align}\label{formulaconditionalmomentestimator}
n^{-1}\sum_{i=1}^n  W(X_i) \varphi(Z_i,\beta) =0,
\end{align}
for every $W$ lying over the class of bounded functions denoted $\mathcal W$. Note that it includes Example \ref{exampleWLS} of Section \ref{sexamples}, for which $\mathcal W$ is a real valued class of functions, and Example \ref{exampleIV}, for which $\mathcal W$ is a $\R^{p\times p}$-valued class of functions. The following statement sheds light on special features involved by the particular objective function $(\beta,W)\mapsto  W (\cdot) \varphi(\cdot ,\beta)$. In the following, we implicitly assume enough measurability on the estimators $\w W$ in order to use Fubini's theorem freely. For this reason, outer expectations are no longer necessary. Define $e(z) = \sup_{\beta\in \mathcal B_0}|  \varphi(z,\beta)| $.

\begin{theorem}\label{thefficiencyforconditionalmomentrestriction}
Assume that (\ref{formulaconditionalmoment}) and (\ref{formulaconditionalmomentestimator}) hold. If 
\begin{enumerate}[(\text{b}1)]\setcounter{enumi}{0}
\item \label{condbidentifiability} $\sup_{W\in \mathcal W} |\w \beta(W) - \beta_0| \overset{P_\infty^o}{\lr}0 $.
\item \label{condbsmmothness}  Whenever  $\beta\r \beta_0$, $E(  \varphi(Z,\beta) -   \varphi(Z,\beta_0))^2 \lr 0$.
\item \label{condbdonsker}  Let $\delta>0$ and let  $\mathcal B_0$ denote an open ball centred at $\beta_0$. The class $\{z\mapsto   \varphi(z,\beta)\ : \ \beta \in \mathcal B_0 \}$ is $P$-Donsker and $Pe^{2+\delta}<+\infty$. Moreover, the class $e\mathcal W $ is $P$-Donsker and uniformly bounded by $W_\infty$.
\item \label{condbdifferentiability} There exists $B:\mathcal X \rightarrow \in \R^{p\times p}$ such that $ E|B(X)|<+\infty$ and, for every $x\in\mathcal X$
\begin{align*}
|E(  \varphi(Z,\beta) -  \varphi(Z,\beta_0)|X=x) -B(x) (\beta-\beta_0 )|\leq \kappa |\beta-\beta_0|^2,
\end{align*}
for some $\kappa>0$ and $EW(X)B(X)$ is bounded and invertible, uniformly in $W$.
\item \label{condboptimality}  There exist $\w W:\mathcal X\mapsto \R$ (suitably measurable) and $ W_0:\mathcal X\mapsto \R$ such that (i)  $ P_\infty  (\w W \in \mathcal W) \r 1$, (ii) $|\w W(x) - W_0(x)| \overset{P_\infty}{\lr} 0$, $P(dx)$-almost everywhere,
\end{enumerate}
 then $\w \beta(\w W)$ has the same asymptotic law as $\w \beta(W_0)$.
\end{theorem}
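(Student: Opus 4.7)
The plan is to specialize Theorem \ref{thefficiency} to the objective function $\psi_W(\beta) := W(X)\varphi(Z,\beta)$, with $\mathcal H = \mathcal W$, $\theta = \beta$, and the constant map $\theta_0(W) = \beta_0$ for every $W\in\mathcal W$. Condition (a\ref{condefficiency1}) is immediate from the tower property applied to (\ref{formulaconditionalmoment}): for every $W\in \mathcal W$,
\begin{align*}
P W(X)\varphi(Z,\beta_0) = E[W(X)\, E(\varphi(Z,\beta_0)\mid X)] = 0,
\end{align*}
so $\widehat\beta(W)$ is indeed a zero of $\P_n\psi_W(\cdot)$, and the conclusion will follow by checking the remaining hypotheses of Theorem \ref{thefficiency}.

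Three regularity conditions follow directly. Condition (a\ref{condconsistency}) is (b\ref{condbidentifiability}). For (a\ref{condl2continuity}'), uniformly in $W\in\mathcal W$ and as $\beta\to\beta_0$,
\begin{align*}
d(\psi_W(\beta),\psi_W(\beta_0))^2 = E[W(X)^2(\varphi(Z,\beta)-\varphi(Z,\beta_0))^2] \le W_\infty^2\, E(\varphi(Z,\beta)-\varphi(Z,\beta_0))^2 \to 0,
\end{align*}
by (b\ref{condbsmmothness}) and the uniform bound $W_\infty$ from (b\ref{condbdonsker}). Integrating the inequality in (b\ref{condbdifferentiability}) against $W(X)$ gives
\begin{align*}
P W(X)\varphi(Z,\beta) - P W(X)\varphi(Z,\beta_0) = E[W(X)B(X)](\beta-\beta_0) + R_W(\beta),
\end{align*}
with $|R_W(\beta)|\le\kappa W_\infty|\beta-\beta_0|^2$; hence $B_W = E[W(X)B(X)]$, and (a\ref{condinvertible}') is precisely the uniform invertibility clause of (b\ref{condbdifferentiability}).

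The principal technical step is (a\ref{conddonsker}), i.e.\ the $P$-Donsker property of the product class $\Psi = \{W\cdot\varphi(\cdot,\beta)\,:\,W\in\mathcal W,\,|\beta-\beta_0|<\delta\}$. By (b\ref{condbdonsker}), $\{\varphi(\cdot,\beta):\beta\in\mathcal B_0\}$ is Donsker with envelope $e$ satisfying $Pe^{2+\delta}<\infty$, while $e\mathcal W$ is Donsker and uniformly bounded by $W_\infty$. I would invoke a Donsker preservation result for products (see, e.g., Example~2.10.8 and Corollary~2.10.25 in \cite{wellner1996}): the pointwise product of a uniformly bounded Donsker class with a Donsker class whose envelope has strictly more than two moments is again Donsker. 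This yields (a\ref{conddonsker}).

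It remains to verify (a\ref{condefficiency3}) and (a\ref{condefficiency4}), which in Theorem \ref{thefficiency} are phrased via a norm on $\mathcal H$ that is not natively available here — this is the main obstacle. Inspection of the proof of Theorem \ref{thefficiency} shows that these hypotheses are used only through the events $\{d(\psi_{\widehat W}(\beta_0),\psi_{W_0}(\beta_0))<\delta_n\}$ and $\{|B_{\widehat W}-B_{W_0}|<\delta_n\}$ having probability tending to one, so I would establish directly that
\begin{align*}
E[(\widehat W(X)-W_0(X))^2\varphi(Z,\beta_0)^2]\overset{P}{\lr}0\quad\text{and}\quad|E[(\widehat W(X)-W_0(X))B(X)]|\overset{P}{\lr}0.
\end{align*}
For each fixed $x$, (b\ref{condboptimality})(ii) and the uniform bound $|\widehat W - W_0|\le 2W_\infty$ give $E[(\widehat W(x)-W_0(x))^2]\to 0$ for $P$-a.e.\ $x$ by dominated convergence on the sample space. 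A Fubini interchange and a second dominated convergence step, using the $P$-integrable envelope $(2W_\infty)^2 e(Z)^2$ (with $Pe^2<\infty$ coming from (b\ref{condbdonsker})), then show that the joint expectation on the left converges to $0$ in $L^1$, hence in probability; the analogous computation with $B(X)$ in place of $\varphi(Z,\beta_0)^2$ uses $P|B|<\infty$. Combined with (b\ref{condboptimality})(i), these two events have probability tending to one, and Theorem \ref{thefficiency} then yields that $\widehat\beta(\widehat W)$ and $\widehat\beta(W_0)$ share the same asymptotic law.
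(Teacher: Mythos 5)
Your overall strategy is the paper's: reduce to Theorem \ref{thefficiency} with $\psi_W(\beta)=W(X)\varphi(Z,\beta)$, check (a\ref{condconsistency}), (a\ref{condl2continuity}'), (a\ref{condinvertible}') and (a\ref{condefficiency1}) as you do, and verify (a\ref{condefficiency3})--(a\ref{condefficiency4}) directly through the two convergences $d(\psi_{\w W}(\beta_0),\psi_{W_0}(\beta_0))\overset{P}{\lr}0$ and $B_{\w W}\overset{P}{\lr}B_{W_0}$ via Fubini and dominated convergence; the paper does exactly this (it phrases it as taking the topology of pointwise convergence in probability on $\mathcal W$), and those steps of yours are correct.

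The gap is in the Donsker step (a\ref{conddonsker}). You invoke a preservation result of the form ``uniformly bounded Donsker class times Donsker class with $L_{2+\delta}$ envelope is Donsker'' and apply it to $\mathcal W\cdot\Phi$. Two problems. First, (b\ref{condbdonsker}) does not assume that $\mathcal W$ itself is Donsker; it assumes that $e\mathcal W$ is Donsker, so the hypothesis of your quoted result is not available. Second, the result you quote is not one of the standard preservation theorems: Example 2.10.8 of van der Vaart and Wellner treats products of two \emph{uniformly bounded} Donsker classes, and with one unbounded factor the natural decomposition $|W\varphi_\beta-\widetilde W\varphi_{\widetilde\beta}|\leq W_\infty|\varphi_\beta-\varphi_{\widetilde\beta}|+e\,|W-\widetilde W|$ leaves you needing precisely that $\{eW:W\in\mathcal W\}$ be Donsker --- multiplying a bounded Donsker class by a fixed unbounded function is not a preservation fact one can simply cite, even with extra moments on the multiplier. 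This is exactly why the hypothesis is phrased as ``$e\mathcal W$ is Donsker'': with that class given, the bound $(W\varphi_\beta-\widetilde W\varphi_{\widetilde\beta})^2\lesssim(\varphi_\beta-\varphi_{\widetilde\beta})^2+(eW-e\widetilde W)^2$ exhibits $\Psi$ as a Lipschitz transform of the pair of Donsker classes $\Phi$ and $e\mathcal W$, and Corollary 2.10.13 of van der Vaart and Wellner applies. A further sign that your route is off is your use of $Pe^{2+\delta}<\infty$ in this step: the paper explicitly notes these moments are \emph{not} used for the Donsker property (they are reserved for the convergence $E\varphi(Z,\beta_0)^2(\w W-W_0)^2\to0$, where your $Pe^2$-based dominated-convergence argument in fact also suffices). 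The fix is local: replace your preservation claim by the Lipschitz-transformation argument applied to $\Phi$ and $e\mathcal W$.
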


\begin{proof}

%
We verify each condition of Theorem \ref{thefficiency}. Clearly (b\ref{condbsmmothness}) implies (a\ref{condl2continuity}') that is enough to get (a\ref{condl2continuity}) (see Remark \ref{remarktrueparameterindepeta}). We now show that (b\ref{condbdonsker})$\Rightarrow $ (a\ref{conddonsker}). Since tightness of random vectors is equivalent to tightness of each coordinate, we can focus on each coordinate separately. In what follows, without loss of generality, we assume that $  \varphi(z,\beta)$ and $W(x)$ are real numbers, for each $\beta\in \mathcal B_0$, $z\in \mathcal Z$, $x\in \mathcal X$.  Because the class of interest is the product of two classes: $ \{z\mapsto   \varphi(z,\beta) \ : \ \beta\in \mathcal B_0\}$ and $\mathcal W$, we can apply Corollary 2.10.13 in \cite{wellner1996}. Given two pairs $(\beta,\widetilde \beta)$ and $(W,\widetilde W)$, we check that
\begin{align*}
( W(x) \varphi (z,\beta)- \widetilde W(x) \varphi (z,\widetilde \beta))^2\leq 2 (  \varphi (z,\beta)-  \varphi (z,\widetilde \beta) )^2 +2e(z)^2 (W(x)-\widetilde W(x) )^2 ,
\end{align*}
and we can easily verify every condition of the corollary. Note that the moments of order $2+\delta$ of $e$ have not been used yet. We now show that (b\ref{condbdifferentiability}) $\Rightarrow$ (a\ref{condinvertible}') by setting $B_{\eta}$ equal to $EW(X)B(X)$ which is indeed invertible and bounded. We have
\begin{align*}
E \{ W(X)(  \varphi(Z,\beta) -  \varphi(Z,\beta_0)  -B(X)(\beta-\beta_0 ))\}  \leq \kappa W_\infty |\beta-\beta_0|^2,
\end{align*}
which implies  (a\ref{condinvertible}'). Note that (\ref{formulaconditionalmoment}) implies (a\ref{condefficiency1}) and (b\ref{condboptimality}) implies (a\ref{condefficiency3}), with respect to the metric of pointwise convergence (in probability). It remains to show that (a\ref{condefficiency4}) holds with $B_{\eta}$ equal to $EW(X)B(X)$. Given $\epsilon >0$, write
\begin{align*}
|\int (\w W(x) - W_0(x)) B(x)   P(dx)  |&\leq  \int |B(x)|\ |\w W(x) - W_0(x)|  P(dx) \\
&\leq \epsilon  \int |B(x)| P(dx) + 2W_\infty \int |B(x)|\mathds 1_{\{|\w W(x) - W_0(x)|>\epsilon\}} P(dx).
\end{align*}
Taking the expectation, Fubini's Theorem leads to 
\begin{align*}
&E|\int (\w W(x) - W_0(x)) B(x)   P(dx)  | \\
&\leq \epsilon  \int |B(x)| P(dx)   + 2W_\infty  \int |B(x)| P_\infty( |\w W(x) - W_0(x)|>\epsilon ) P(dx),
\end{align*}
the right-hand side goes to $0$ by the Lebesgue dominated convergence theorem. Conclude choosing $\epsilon$ small. Using that $Ee^{2+\delta}<+\infty$, the same analysis is conducted to show that $E   \varphi(Z,\beta_0)^2 (\w W(X) - W_0(X))^2 $ goes to $0$ in $P_\infty$-probability. 

\end{proof}

\begin{remark}\label{remarkonconditionHuberloss}\normalfont

Condition (b\ref{condbdifferentiability}) deals with the regularity of the map $\beta\mapsto E(\varphi(Z,\beta)|X=x)$. This is in general weaker than asking for the regularity of the map $\beta\mapsto \varphi(z,\beta)$. For instance it permits to include the Huber loss function (defined in Example \ref{exampleHUBER}) as an example. Note also that contrary to $\mathcal W$, the class of function $\{z\mapsto   \varphi(z,\beta)\ : \ \beta \in \mathcal B_0 \}$ is not suppose to be bounded. This is important to have this flexibility in order to consider examples such as weighted least-square.
\end{remark}

\begin{remark}\label{remark:limitlaw}\normalfont
Under the conditions of Theorem \ref{thefficiencyforconditionalmomentrestriction}, the sequence $\sqrt n (\w \beta(W)-\beta_0)$ converges weakly in $\ell^\infty (\mathcal W)$ to a tight zero-mean Gaussian element whose covariance function is given by 
\begin{align*}
(W_1,W_2)\mapsto  C_{W_1}^{-1} E(   W_1(X) \varphi(Z,\beta_0)   \varphi(Z,\beta_0)^T W_2(X))C_{W_2} ^{-1},
\end{align*}
with $C_{W} =E( W(X)B(X))$.
\end{remark}

\begin{remark}\label{remark:metricW}\normalfont
In the statement of Theorem \ref{thefficiency}, the metric $\|\cdot\|$ on the space $\mathcal H$ needs to be not too strong in order to prove (a\ref{condefficiency3}) and not too weak to guarantee (a\ref{condefficiency4}). In the context of conditional moment restriction, it is required that $E\{e(Z)(W(X)- W_0(X))\}^2\rightarrow 0$ whenever $\|W-W_0\|\rightarrow 0$. If $\w W$ is a Nadaraya-Watson estimator, the uniform metric is too strong to prove (a\ref{condefficiency3}) because such an estimator can be inconsistent at the boundary of the domain. In virtue of the H\"older's inequality, one has 
\begin{align*}
E(e(Z)(W(X)- W_0(X)))^2\leq \left\{Ee(Z)^{\tfrac {2(r+2)} {r} } \right\}^{\tfrac {r} {r+2}} \left\{E(W(X)- W_0(X))^{2+r}\right\}^{\tfrac{2}{2+r}},
\end{align*}
for some $r>0$, making reasonable to work with the $L_{2+r}(P)$-metric. However the Lebesgue's dominated convergence theorem permits to use the topology of pointwise convergence (in probability). This results in weaker conditions on the moments of $e(Z)$.

\end{remark}

Finally we rely on the bracketing numbers of the associated classes. Even if the following result is weaker than Theorem \ref{thefficiencyforconditionalmomentrestriction}, the bracketing approach offers tractable conditions in practice. Moreover it allows for function classes that depend on $n$.

\begin{theorem}\label{thefficiencyforconditionalmomentrestrictionwithbracketing}
Assume that (\ref{formulaconditionalmoment}), (\ref{formulaconditionalmomentestimator}), (b\ref{condbidentifiability}), (b\ref{condbsmmothness}) and (b\ref{condbdifferentiability}) hold. If
\begin{enumerate}[(b1')]\setcounter{enumi}{2}
\item \label{condbprimedonsker}  denote by ${\Phi}= \{z\mapsto   \varphi(z,\beta)\ : \ \beta \in \mathcal B_0 \}$, then
\begin{align*}
(i)\qquad\qquad &\int _0^{+\infty} \sqrt {\log{\mathcal N_{[\ ]} \big(\epsilon, { \Phi} , L_2(P) \big)}} d\epsilon <+\infty ,\\
(ii)\qquad\qquad &\int_0^{\delta_n} \sqrt {\log{\mathcal N_{[\ ]} \big(\epsilon , \mathcal W_n , L_r(P)  \big)}} d\epsilon \longrightarrow 0 ,
\end{align*}
for every sequence $\delta_n\rightarrow 0$, and $Ee(Z)^{2+\delta}<+\infty$, for some $\delta>0$ and $r=\tfrac {2+\delta}{\delta}$,
\setcounter{enumi}{4}
\item \label{condbprimeoptimality} there exist $\w W:\mathcal X\mapsto \R$ (suitably measurable)  and $ W_0:\mathcal X\mapsto \R$ such that (i)  $ P_\infty  (\w W \in \mathcal W_n) \r 1$, (ii) $|\w W(x) - W_0(x)| \overset{P_\infty}{\lr} 0$, $P(dx)$-almost everywhere,
\end{enumerate}
in place of (b\ref{condbdonsker}) and (b\ref{condboptimality}), respectively, then $\w \beta(\w W)$ has the same asymptotic law as $\w \beta(W_0)$.
\end{theorem}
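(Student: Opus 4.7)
The argument follows the proof of Theorem~\ref{thefficiencyforconditionalmomentrestriction} step by step, replacing the Donsker-based equicontinuity that there came from (b\ref{condbdonsker}) by a bracketing maximal inequality that accommodates the triangular-array class $\{(z,x)\mapsto W(x)\varphi(z,\beta): W\in \mathcal W_n,\ \beta\in\mathcal B_0\}$, since $\mathcal W_n$ is now allowed to depend on~$n$. After the linearization from Theorem~\ref{theoremweakconvZestimator}, the task reduces to showing
\begin{align*}
\mathbb G_n\bigl\{\w W\bigl[\varphi(\cdot,\w\beta(\w W))-\varphi(\cdot,\beta_0)\bigr]\bigr\}=o_{P_\infty^o}(1),\qquad\mathbb G_n\bigl\{(\w W-W_0)\varphi(\cdot,\beta_0)\bigr\}=o_{P_\infty^o}(1),
\end{align*}
together with $|C_{\w W}-C_{W_0}|\overset{P_\infty^o}{\lr}0$. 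The matrix convergence is handled exactly as in Theorem~\ref{thefficiencyforconditionalmomentrestriction} using (b\ref{condbdifferentiability}) and the pointwise convergence in (b5').

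For each empirical-process increment I would invoke the stochastic-equicontinuity form of the bracketing maximal inequality (\cite{wellner1996}, Theorem~2.14.2), applied to the associated centered class with its natural envelope. The decisive step is to bound the $L_2(P)$-bracketing numbers of the product class $\{W\varphi(\cdot,\beta):W\in\mathcal W_n,\,\beta\in\mathcal B_0\}$ in terms of those of $\mathcal W_n$ in $L_r(P)$ and of $\Phi$ in $L_2(P)$. Using the decomposition $W_U\varphi_U-W_L\varphi_L = W_U(\varphi_U-\varphi_L) + \varphi_L(W_U-W_L)$ together with the envelope bound $|\varphi_L|\leq e$, H\"older's inequality with the conjugate exponents $r=(2+\delta)/\delta$ and $(2+\delta)/2$---matched exactly by the moment hypothesis $Pe^{2+\delta}<\infty$ in (b3')---controls each summand in $L_2(P)$ by an $L_r(P)$-bracket diameter of $\mathcal W_n$ and an $L_2(P)$-bracket diameter of $\Phi$. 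The resulting Dudley integral is then controlled by (b3')(i)--(ii) and vanishes as the local radius shrinks; the envelope residual is $o(1)$ by the same moment bound.

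Two ancillary items are required. First, $\|\w W-W_0\|_{L_r(P)}\overset{P_\infty^o}{\lr}0$: this follows from the pointwise convergence in (b5') combined with the dominated convergence theorem, using the envelope of~$\mathcal W_n$ implicit in (b3')(ii). Second, $E\{\varphi(Z,\beta_0)^2(\w W(X)-W_0(X))^2\}\r 0$ in probability is obtained as at the end of the proof of Theorem~\ref{thefficiencyforconditionalmomentrestriction}. The main technical obstacle lies in the bracketing bound on the product class: the H\"older pairing $(r,(2+\delta)/2)$ is tight, so the envelope of $\mathcal W_n$ must be handled carefully to ensure that the $L_r(P)$-bracketing hypothesis on~$\mathcal W_n$ translates into an $L_2(P)$-bracketing bound of comparable magnitude for the product class, without losing a factor in the rate of the Dudley integral.
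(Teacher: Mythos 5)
Your proposal is correct and follows essentially the same route as the paper: after the linearization of Theorems \ref{theoremweakconvZestimator}--\ref{thefficiency}, the Donsker argument is replaced by a bracketing entropy integral for the product class $\Phi\mathcal W_n$, built from $L_2(P)$-brackets of $\Phi$ and $L_r(P)$-brackets of $\mathcal W_n$ via exactly the H\"older pairing $\bigl(\tfrac{2+\delta}{\delta},\tfrac{2+\delta}{2}\bigr)$ and the envelope $e$, while the $L_2(P)$-distance condition and the convergence of $C_{\w W}$ are obtained by dominated convergence and Fubini as in Theorem \ref{thefficiencyforconditionalmomentrestriction}. The only cosmetic difference is the reference: the paper packages the maximal-inequality step as Theorem 2.2 of van der Vaart and Wellner (2007) for classes depending on $n$, rather than invoking the 1996 bracketing maximal inequality directly, and the ``main technical obstacle'' you flag (not losing a power of $\epsilon$ when converting the $L_r$ bracket of $\mathcal W_n$ into an $L_2$ bracket of the product) is handled in the paper by the same Minkowski--H\"older bound you describe.
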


\begin{proof}
The proof follows from Theorems \ref{theoremweakconvZestimator} and \ref{thefficiency} with the following change. To obtain that
\begin{align}\label{stochequi:n}
\mathbb G_n\{  \varphi(\cdot ,\w \beta) \w W -  \varphi(\cdot ,\beta_0)  W_0   \}=o_p(1),
\end{align}
we can no longer rely on the Donsker property since the class $\mathcal W_n$ depends on $n$. Instead we use Theorem 2.2 in \cite{wellner2007},  that asserts that (\ref{stochequi:n}) holds whenever
\begin{align*}
&E(   \varphi(Z ,\w \beta) \w W(X) -  \varphi(Z,\beta_0)  W_0(X) )^2\overset{P_{\infty}^o}{\longrightarrow} 0,\\
&\int_0^{\delta_n} \sqrt {\log{\mathcal N_{[\ ]} \big(\epsilon , \Phi \mathcal W_n , L_2(P) \big)}} d\epsilon \longrightarrow 0,\\
& Pe^2=O(1),\qquad  Pe^2\mathds 1_{\{e\geq \epsilon \sqrt n \}}  \rightarrow 0,
\end{align*}
for every sequence $\delta_n\rightarrow 0$. Hence the proof follows from verifying the three previous conditions. Using the Lebesgue dominated convergence theorem and the Fubini's theorem (as it was done in the proof of Theorem \ref{thefficiencyforconditionalmomentrestriction} to obtain (a\ref{condefficiency4})), the first condition is a consequence of (b\ref{condbsmmothness}) and the pointwise convergence in probability of $\w W$. The second condition is a consequence of the bounds on the bracketing entropy of $ \Phi$ and $\mathcal W_n$ provided in (b\ref{condbprimedonsker}'). Given $\epsilon>0$, let $[\underline {  \varphi}_j,\overline{ \varphi}_j]$, $j=1,\ldots,n_1$, be brackets of $L_2(P)$-size $\epsilon$ that cover $\Phi$ and let $[ \underline W_j, \overline{W}_j]$, $j=1,\ldots,n_2$, be brackets of $L_r(P)$-size $\epsilon$ that cover $\mathcal W_n$. Because the function $z\mapsto xy$ attains its bounds on every rectangle at the edges of each rectangle, the brackets
\begin{align*}
[\min(g_{jk} )  , \max(g_{jk} )], \quad j=1,\ldots,n_1, \ k=1,\ldots ,n_2,
\end{align*}
with $g_{jk}=  (\underline {\varphi}_j \underline W_k, \underline {  \varphi}_j  \overline{W}_k , \overline{ \varphi}_j\underline W_k, \overline{ \varphi}_j \overline{W}_k )$, covers the class $\Phi\mathcal W_n$. Moreover, we have
\begin{align*}
 |\max(g_{jk} )  - \min(g_{jk} )|  &\leq | \overline {  \varphi}_j  \overline W_k  - \underline {  \varphi}_j \underline {  W}_k | \\
&\leq  W_\infty | \overline {  \varphi}_j -\underline {  \varphi}_j |   + | e| | \overline W_k  -\underline{W}_k|,
\end{align*}  
then, using Minkowski's and H\"older's inequalities, we get
\begin{align*}
\| \max(g_{jk} )  - \min(g_{jk} )\|_2\leq  \epsilon (W_\infty  + \|e\|_{2+\delta}).
\end{align*}
Hence we have shown that, for every $\epsilon>0$, $\mathcal N \big(\epsilon (W_\infty  + \|e\|_{2+\delta}) , \Phi\mathcal W_n , L_2(P) \big) $ is smaller than $\mathcal N \big(\epsilon , { \Phi} , L_2(P) \big)$ times $ \mathcal N \big(\epsilon , \mathcal W_n ,L_r(P) \big) $. This implies the integrability condition. The third condition is simply obtained by using the $2+\delta$-moments of the function $e$.
\end{proof}

\section{Application to weighted regression}\label{s3}

In this section, we are interested in the estimation of $\beta_0=(\beta_{01},\beta_{02})\in  \R^{1+q}$, defined by the following model
\begin{align}\label{regressionmodel}
E(Y|X) = \beta_{01}+\beta_{02}^TX,
\end{align}
where the conditional distribution of $Y- \beta_{01}-\beta_{02}^TX$ given $X\in \R^{q}$ is symmetric about $0$. For the sake of clarity, we focus on a linear model but under classical regularity conditions one can easily include more general link functions in our framework. 
We consider heteroscedasticity, i.e., the residual $Y - \beta_{01} - \beta_{02}^TX$ are not independent from the covariates $X_i$.  In this context, the classical least-squares estimator is not efficient and one should use weighted least-squares to improve the estimation. Assume that $(Y,X)$, $(Y_1,X_1),\ldots, (Y_n,X_n)$ are independent and identically distributed random variable form the model (\ref{regressionmodel}). A general class of estimators is given by $\w \beta(w)$, defined by
\begin{align*}
\w \beta(w) = \argmin_{(\beta_1,\beta_2)\in  \R^{1+q}} n^{-1} \sum _{i=1}^n \rho(|Y_i-\beta_1-\beta_2^T X_i|)w(X_i),
\end{align*}
where $\rho: \R^+\r\R^+$ is convex positive and differentiable and $w:\R^q\rightarrow \R$ is called the weight function. Such a class of estimators is studied in \cite{huber1967}, where a special attention is drawn on robustness properties associated to the choice of $\rho$. Note that when $\rho(x)=x^2$, we obtain the classical linear least-squares estimator, when $\rho(x)=x$, we get median regression, $\rho(x) = (x^2/2)\mathds 1_{\{0\leq x \leq c\}}+c(x-c/2) \mathds 1_{\{x>c\}}$ corresponds Huber robust regression (where $c$ needs to be chosen in a proper way). Finally quantile regression estimators and $L^p$-losses estimators are as well included in this class.

We consider three different approaches to estimate the optimal weight function $w_0$. Each approach leads to different rates of convergence. The first one is parametric, i.e., we assume that $w_0$ belongs to a class depending on an Euclidean parameter. The second one is non-parametric, i.e., we do not assume anything on $w_0$ except some regularity conditions. The third one is semiparametric and reflects a compromise between both previous approaches.

It is an exercise to verify each condition of Theorem \ref{thefficiencyforconditionalmomentrestrictionwithbracketing}. Here we focus on the special conditions dealing with the estimator $\w w$ of $w_0$, namely Conditions (b\ref{condbprimedonsker}') (ii) and (b\ref{condbprimeoptimality}'). The other conditions are classical and have been examined for different examples \citep{newey1994book}.

\subsection{Efficient weights}\label{s31}

A first question that arises is to know whether or not such a class of estimators possesses an optimal member. The answer is provided in \cite{bates1993} where the existence of a minimal variance estimator is debated. Basically, optimal members must satisfy the equation: ``the variance of the score equals the Jacobian of the expected score" (as maximum likelihood estimators). The optimal weight function is then given by
\begin{align*}
x\mapsto \frac{ 2\rho'(0) f_{\epsilon_1|X_1=x}(0) +E(g_{2,\beta_0}(Y_1,X_1)|X_1=x)}{E(g_{1,\beta_0}(Y_1,X_1)|X_1=x)},
\end{align*}
where $g_{1,\beta}(y,x)=\rho'(|y-\beta_1-\beta_2^Tx|)^2$, $g_{2,\beta}(y,x) = \rho''(|y-\beta_1-\beta_2^Tx|)$, $\epsilon _1= Y_1-\beta_{01}-\beta_{02}^TX_1 $ and $f_{\epsilon_1|X_1} $ stands for the conditional distribution of $\epsilon_1$ given $X_1$. In the following, we restrict our attention to the case $\rho'(0)=0$, so that $w_0$ simplifies to
\begin{align*}
w_0(x)= \frac{N_{\beta_0}(x)}{D_{\beta_0}(x)},
\end{align*}
with $N_{\beta}(x)= E(g_{2,\beta}(Y_1,X_1)|X_1=x)f(x)$ and $D_{\beta} = E(g_{1,\beta}(Y_1,X_1)|X_1=x)f(x)$, $f$ being the density of $X_1$.
Concerning the examples cited above, this restriction only drop out quantile regression estimators. 

A first estimator that needs to be computed is $\w \beta_{}^{(0)} = (\w \beta_{1}^{(0)},\w \beta_{2}^{(0)})$, defined as $\w \beta(w)$ with constant weight function, $w(x)=1$ for every $x\in \R^q$. Even if $\w \beta^{(0)}$ is not efficient, it is well known that it is consistent for the estimation of $\beta_0$. Since $w_0$ depends on $\beta_0$, we use $\w \beta_{}^{(0)}$ as a first-step estimator to carry on the estimation of $w_0$.

\subsection{Parametric estimation of \texorpdfstring{$w_0$}{%
a }%
}\label{s32}

In this paragraph, we assume that $w_0(x) =w(x,\gamma_0) $, where $\gamma_0$ belongs to an Euclidean space. Typically $\gamma_0$ is a vector that contains $\beta_0$. Such a situation has been extensively studied (see for instance \cite{carroll1988} and the reference therein), and it has been shown under quite general conditions that $\gamma_0$ can be estimated consistently. As a consequence we assume in the next lines that there exists $\w\gamma$ such that $\w\gamma{\rightarrow} \gamma_0$, in $P^o_\infty$-probability. The estimator of $w_0(x)$ is then given by $w(x,\w\gamma)$, for every $x\in \R^q$. As a consequence, $\beta_0$ is estimated by 
\begin{align*}
\w \beta = \argmin_{(\beta_1,\beta_2) \in \R^{1+q}} n^{-1} \sum _{i=1}^n \rho(|Y_i-\beta_1-\beta_2^T X_i|)w(X_i,\w\gamma).
\end{align*}
To verify (b\ref{condbprimedonsker}') (ii) and (b\ref{condbprimeoptimality}'), it is enough to ask the Lipschitz condition
\begin{align}\label{conditionparametriccase}
|w(x,\gamma)-w(x,\widetilde \gamma)| \leq |\gamma- \widetilde \gamma|.
\end{align}
On the one hand, (b\ref{condbprimeoptimality}') holds trivially with $\mathcal W_n$ equal to the class
$\{x\mapsto w(x,\gamma) \ :\  |\gamma-\gamma_0|\leq \delta \}$, for any $\delta>0$. On the other hand, (b\ref{condbprimedonsker}') (ii) is satisfied because the latter class has the same size as the Euclidean ball of size $\delta$. Condition (\ref{conditionparametriccase}) is sufficient but not necessary. Other interesting examples include for instance $w(x,\beta,\gamma)= (1+1 _{\{\beta_2^Tx\leq \gamma \}})^{-1}$, reminiscent of a piecewise heteroscedastic model.

In the parametric regression context given by (\ref{regressionmodel}), the parametric modelling of $w_0$ has serious drawbacks. Since a parametric form is assumed for the conditional mean, it is very restrictive in addition to parametrize the optimal weights. Moreover, the definition of $w_0$, as a quotient of conditional expectations, makes difficult to set any plausible parametric family. Finally, theorem \ref{thefficiencyforconditionalmomentrestrictionwithbracketing} does not require any rate of convergence for the estimation of $w_0$. Hence a parametric approach is unnecessary.

\subsection{Nonparametric estimation of \texorpdfstring{$w_0$}{%
a }%
}\label{s33}

We consider the bracketing entropy generated by nonparametric estimator. The classical approach taken for local polynomial estimators rely on the asymptotic smoothness of such estimators \citep{ojeda2008}. In the Nadaraya-Watson case, this smoothness approach can not succeed since whenever the support of the targeted function is compact, the bias tends to a function that jumps at the boundary of the support. In the following, the Nadaraya-Watson case is studied by splitting the bias and the variance. To compute the bracketing entropy generated by these estimators, we treat similarly and independently the numerator and the denominator. For the numerator $\w N$, we write it as $E\w N+  \Delta_N$. Since the class drawn by $E\w N$ is not random, it certainly results in a smaller entropy than the function class generated by $\Delta_N$, which turns to be included, as $n$ increases and under reasonable conditions, in a smooth class of functions.

For any differentiable function $f:\mathcal Q\subset \mathbb R^q \rightarrow \mathbb R$ and any $l=(l_1,\ldots, l_q)\in \mathbb N^q$, let $f^{(l_1,\ldots, l_q)}$ abbreviates $ \frac{\partial ^{|l|}}{\partial x_1^{l_1}\ldots \partial x_q ^{l_q}} f$, where $|l|=\sum_{j=1}^q l_j$. For $k\in \mathbb N$, $0<\alpha \leq1$, $M>0$ we say that $f\in \mathcal C_{k+\alpha ,M}(\mathcal Q)$ if, for every $ | l |\leq k$, $f^{(l)} $ exists and is bounded by $M$ on $\mathcal Q$ and, for every $ |l|=k$ and every $(x,x')\in \mathcal Q^2$, we have
\begin{align*}
|f^{(l)}(x)-f^{(l)}(x')| \leq M|x-x' |^{\alpha}.
\end{align*} 
We define the estimator by
\begin{align*}
\w w(x) = \frac{\w N(x)}{\w D(x)},
\end{align*}
where 
\begin{align*}
&\widehat {N}(x) = n^{-1} \sum_{i=1}^n  g_{2,\w \beta^{(0)} }(Y_i,X_i)  K_h(x-X_i),\\
&\widehat D(x) = n^{-1} \sum_{i=1}^n g_{1,\w \beta^{(0)} } (Y_i,X_i) K_h(x-X_i),
\end{align*}
with $K_h(\cdot) =h^{-q}K(\cdot / h)$. We require the following assumptions.

\begin{enumerate}[(c1)]
\item \label{as:consistencybetafirststep} The first step estimator is consistent, i.e., $\w \beta^{(0)}\overset{P_\infty^o}{\lr} \beta_0$.
\item \label{as:support} The variable $X_1$ has a density $f$ that is supported on a bounded convex set with nonempty interior $\mathcal Q\subset \mathbb R^q$ and there exists $b>0$ such that $\inf_{x\in \mathcal Q}  D_0 (x) \geq b>0$.
\item \label{as:smoothness}
There exists $0<\alpha_2\leq 1$, $M_1>0$ and $\mathcal B_0 \subset\R^q$, an open ball centred at $\beta_{02}$, such that for any $x\in \mathbb R^q $,  the maps $\beta\mapsto  N_{\beta}(x)$ and $\beta\mapsto  D_{\beta}(x)$ belongs to $\mathcal C_{\alpha_2,M_1}(\mathcal B_0)$. Moreover, the map $x\mapsto D_0(x)$ is uniformly continuous on $\mathcal Q$,
\item \label{as:kernel} The kernel $K:\mathbb R^q \rightarrow \mathbb R$ is a symmetric ($K(u)=K(-u)$) bounded function compactly supported such that 
\begin{align*}
\int K(u)du=1,\qquad  \int _{ \{(\mathcal Q-x)/h\} }  K(u) du\geq c>0 ,
\end{align*}
whenever $0<h\leq h_0$. Moreover, there exists $k_1\in\mathbb N$ such that for each $|l|\leq k_1+1$, the class
\begin{align*}
 \left\{ K^{(l)}\left(\frac{x-\cdot}{h}\right)\ : \ h>0, \ x\in \mathcal Q  \right\} \text{ is a bounded measurable }  VC  \text{ class,}
\end{align*}
(we use the terminology of \cite{gine2002} including the measurability requirements).
\item \label{as:bandwidth} There exists $0<\alpha_1\leq 1$ such that as $n$ increases, 
 \begin{align*}
h\rightarrow 0,\qquad \frac{|log(h)|}{nh^{q+2(k_1+\alpha_1)}}\rightarrow 0.
 \end{align*}
\end{enumerate}

Let $\mathcal V(I)$ denote the set of all the functions that take their values in the set $I$. Define 
\begin{align*}
\mathcal F_n= \mathcal A_{1,n}/ \mathcal A_{2,n}
\end{align*}
where
\begin{align*}
&\mathcal A_{1,n}= \{\mathcal C_{k_1+\alpha_1,M_1}(\mathcal Q)+\mathcal E_{N,n}\}\cap\mathcal V[-M_2,M_2], \\
&\mathcal A_{2,n}=\{ \mathcal C_{k_1+\alpha_1,M_1}(\mathcal Q)+\mathcal E_{D,n}\} \cap \mathcal V[cb/2,M_2],\\
&\mathcal E_{N,n} = \left \{ x\mapsto \int N_{\beta}(x-hu)K(u) du\ :\ \beta\in \mathcal B_0  \right\},\\
&\mathcal E_{D,n}  = \left \{ x\mapsto \int D_{\beta}(x-hu)K(u) du\ :\ \beta\in \mathcal B_0  \right\},
\end{align*}  
and $M_2= 2M_1\int |K(u)|du$.
\begin{theorem}\label{th:nonparametricclassoffunction}  If (c\ref{as:consistencybetafirststep}) to (c\ref{as:bandwidth}) hold, we have  
\begin{align*}
&P_\infty( \widehat {w} \in \mathcal F_n )\rightarrow 1,\\ 
&\log \mathcal N_{[\ ]} (\epsilon ,   \mathcal F_n  , \|\cdot\|_\infty)\leq \text{const.} \epsilon^{- q/(k_1+\alpha_1) },\qquad \text{for any } \epsilon>0, 
\end{align*}
where $\text{const.}$ depends on $q$, $\mathcal Q$, $k_1+\alpha_1$, $M_1$ and $K$.
\end{theorem}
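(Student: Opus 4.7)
The plan is to prove the two conclusions separately: first the high-probability membership $\widehat w \in \mathcal F_n$, then the bracketing estimate for $\mathcal F_n$. The main idea, already indicated in the paragraph preceding the theorem, is to split $\widehat N$ and $\widehat D$ into an expectation part (which lies in the finite-dimensional class $\mathcal E_{N,n}$ or $\mathcal E_{D,n}$ and is harmless) and a stochastic deviation part that, with high probability, sits in the Hölder ball $\mathcal C_{k_1+\alpha_1,M_1}(\mathcal Q)$.

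For the membership claim, I would write $\widehat N = (\widehat N - E[\widehat N\mid \widehat\beta^{(0)}]) + E[\widehat N\mid \widehat\beta^{(0)}] = \Delta_N + E\widehat N$, and similarly $\widehat D = \Delta_D + E\widehat D$. On the event $\{\widehat\beta^{(0)}\in \mathcal B_0\}$, whose probability tends to $1$ by (c\ref{as:consistencybetafirststep}), the expectation parts lie in $\mathcal E_{N,n}$ and $\mathcal E_{D,n}$ and are bounded by $M_1\int|K|=M_2/2$. The core technical step is then to show that for every $|l|\leq k_1+1$,
\begin{align*}
\sup_{x\in\mathcal Q,\,\beta\in\mathcal B_0}\Big| n^{-1}\sum_{i=1}^n g_{j,\beta}(Y_i,X_i)K_h^{(l)}(x-X_i)-E[\,\cdot\,]\Big| = O_{P_\infty^o}\!\left(\sqrt{\tfrac{|\log h|}{nh^{q+2|l|}}}\right),
\end{align*}
which I would obtain by the Giné–Guillou/Einmahl–Mason uniform-in-bandwidth inequality using the bounded measurable VC property in (c\ref{as:kernel}) applied to each $K^{(l)}$, combined with the smooth dependence of $g_{1,\beta},g_{2,\beta}$ on $\beta$ over $\mathcal B_0$. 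By (c\ref{as:bandwidth}) these suprema tend to $0$ in probability for $|l|\leq k_1+1$, hence $\|\Delta_N^{(l)}\|_\infty,\|\Delta_D^{(l)}\|_\infty \to 0$. A Landau–Kolmogorov type interpolation then gives, for $|l|=k_1$,
\begin{align*}
|\Delta_N^{(l)}(x)-\Delta_N^{(l)}(x')| \le \min\!\Big(2\|\Delta_N^{(l)}\|_\infty,\ \max_{|m|=k_1+1}\|\Delta_N^{(m)}\|_\infty\,|x-x'|\Big)\le o_{P_\infty^o}(1)\,|x-x'|^{\alpha_1},
\end{align*}
so eventually $\Delta_N,\Delta_D\in \mathcal C_{k_1+\alpha_1,M_1}(\mathcal Q)$. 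The two-sided value bounds follow: $|\widehat N|\leq |E\widehat N|+|\Delta_N|\leq M_2$ eventually; for $\widehat D$, $E\widehat D(x)\geq D_0(x)\int_{(\mathcal Q-x)/h}K\,-o(1)\geq bc-o(1)$ by (c\ref{as:support}) and the smoothness/boundary assumptions (c\ref{as:smoothness})–(c\ref{as:kernel}), so eventually $\widehat D\geq cb/2$. This proves $\widehat w = \widehat N/\widehat D \in \mathcal A_{1,n}/\mathcal A_{2,n}=\mathcal F_n$ with probability tending to one.

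For the bracketing estimate I would assemble it from standard building blocks. The Kolmogorov–Tikhomirov bound (see van der Vaart–Wellner, Corollary~2.7.2) gives $\log \mathcal N_{[\,]}(\epsilon,\mathcal C_{k_1+\alpha_1,M_1}(\mathcal Q),\|\cdot\|_\infty)\lesssim \epsilon^{-q/(k_1+\alpha_1)}$, while $\mathcal E_{N,n},\mathcal E_{D,n}$ are smoothly parametrized by $\beta$ in a bounded subset of $\R^{1+q}$, giving $\log \mathcal N_{[\,]}(\epsilon,\mathcal E_{\cdot,n},\|\cdot\|_\infty)\lesssim \log(1/\epsilon)$. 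Brackets of a sum are obtained by summing brackets, so the sum classes inherit the polynomial-in-$1/\epsilon$ bound, and intersecting with $\mathcal V[-M_2,M_2]$ or $\mathcal V[cb/2,M_2]$ only forces truncation of the bracket endpoints and does not increase the entropy. Finally, since the map $(u,v)\mapsto u/v$ restricted to $[-M_2,M_2]\times[cb/2,M_2]$ is Lipschitz with a constant $L$ depending only on $M_2,c,b$, a bracket $[\underline u,\overline u]$ for the numerator and $[\underline v,\overline v]$ for the denominator with sup-norm widths $\leq \epsilon$ produces a bracket for the ratio of width $\leq L\epsilon$, so $\log \mathcal N_{[\,]}(\epsilon,\mathcal F_n,\|\cdot\|_\infty)\leq \log \mathcal N_{[\,]}(\epsilon/L,\mathcal A_{1,n},\|\cdot\|_\infty)+\log \mathcal N_{[\,]}(\epsilon/L,\mathcal A_{2,n},\|\cdot\|_\infty)\lesssim \epsilon^{-q/(k_1+\alpha_1)}$.

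The main obstacle is the first part, specifically proving that the centered kernel estimators $\Delta_N,\Delta_D$ and all their derivatives up to order $k_1+1$ converge uniformly to zero at the rate dictated by (c\ref{as:bandwidth}). Verifying the VC hypotheses for the kernel-derivative classes and turning the Einmahl–Mason style uniform deviation bound into a clean Hölder statement (via the interpolation inequality above) is the technical heart of the argument; everything else is standard entropy bookkeeping.
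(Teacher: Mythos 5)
Your proposal follows essentially the same route as the paper's proof: the same bias/deviation decomposition of $\widehat N$ and $\widehat D$, the same Gin\'e--Guillou/Talagrand concentration bounds for the kernel-derivative classes (the paper's Lemma~\ref{lem:talagrand}), the same H\"older-ball membership argument (your interpolation inequality is equivalent to the paper's case split at $|x-x'|=h$ versus $|x-x'|>h$), and the same entropy bookkeeping for the sum and quotient classes. The only place you are slightly too quick is the lower bound $\widehat D\geq cb/2$: since $K$ may change sign, one cannot directly write $E\widehat D(x)\geq D_0(x)\int_{(\mathcal Q-x)/h}K - o(1)$, and the paper instead splits $K$ into its positive and negative parts and uses the uniform continuity of $D_0$ (via the quantities $b(x,h)$ and $M(x,h)$) to control the negative contribution --- a routine repair, not a change of approach.
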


\begin{proof}
By (c\ref{as:consistencybetafirststep}), we have that $\w \beta^{(0)}\in \mathcal B_0$ with probability going to $1$. Introducing
\begin{align*}
&\Delta_N (x) = \widehat {N}(x)-E\widehat {N}(x),\\
&\Delta_D (x) = \widehat {D}(x)-E\widehat {D}(x),
\end{align*}
we consider the following three steps. 
\begin{enumerate}[(i)]
\item \label{step1thnp} $P_\infty( \Delta_N\in \mathcal C_{k_1+\alpha_1,M_1}(\mathcal Q) )\rightarrow 1$ and $P_\infty( \Delta_D\in \mathcal C_{k_1+\alpha_1,M_1}(\mathcal Q) )\rightarrow 1$,
\item \label{step2thnp} $P_\infty( \widehat {N} \in \mathcal A_{1,n})\rightarrow 1$ and $P_\infty( \widehat {D} \in \mathcal A_{2,n} )\rightarrow 1$ (note that this is the first claim of the theorem),
\item\label{step3thnp} we compute the bound on the bracketing numbers of $\mathcal F_n$.
\end{enumerate}

\noindent \textit{Proof of (\ref{step1thnp}).} We make the proof for $ \Delta_N $ since the treatment of $\Delta_D$ is similar. Given $l=(l_1,\ldots, l_q)$ such that $|l|\leq k_1+1$, we have that $ \Delta_N^{(l)}(x)=  h^{-(q+|l|)} E[ g_{1,\w \beta^{(0)}} (Y_1,X_1)  K^{(l)} (h^{-1}(x-X_1))]$, for any $h>0$. Hence
\begin{align*}
\Delta_N^{(l)}(x)= \frac 1 {nh^{q+|l|}} \left( \sum_{i=1}^ng_{1,\w \beta^{(0)}} (Y_i,X_i) K^{(l)}(x-X_i)-E[g _{1,\w \beta^{(0)}} (Y_1,X_1)  K^{(l)}(x-X_1)] \right), 
\end{align*}
and we can apply Lemma \ref{lem:talagrand} to get that
\begin{align*}
\sup_{x\in \mathcal Q} |\Delta_N^{(l)}(x)| =O_{P_\infty} \left ( \sqrt {\frac{|\log(h)|}{nh^{q+2|l|}}} \right).
\end{align*}
Then for $1\leq |l|\leq k_1$, we know that $\Delta_N^{(l)}$ goes to $0$ uniformly over $\mathcal Q$, making the derivatives of $\Delta_N$ (with order smaller than or equal to $k_1$), bounded by $M_1$ with probability going to $1$. Now we consider the Holder property for $\Delta_N^{(l)}$ when $|l|=k_1$. For any $|x-x'|\leq h$, by the mean value theorem, we have that
\begin{align*}
|(\Delta_N^{(l)} (x)-\Delta_N^{(l)}(x')) (x-x')^{-\alpha_1}| \leq \sup_{z\in \mathcal Q} |\nabla_z \Delta_N^{(l)} (z)|\  |x-x'|^{1-\alpha_1}\leq  \sup_{z\in \mathcal Q} |\nabla_z \Delta_N^{(l)} (z)| h^{1-\alpha_1},
\end{align*}
which is, in virtue of Lemma \ref{lem:talagrand}, equal to a  $O_{P_\infty} \left ( \sqrt {\frac{|\log(h)|}{nh^{q+2(k_1+\alpha_1)}}} \right)=o_P(1)$. For any $|x-x'|>h$, we have
\begin{align*}
|(\Delta_N^{(l)} (x)-\Delta_N^{(l)}(x')) (x-x')^{-\alpha_1}| \leq 2\sup_{z\in \mathcal Q} |\Delta_N^{(l)} (z)| h^{-\alpha_1},
\end{align*}
that has the same order as the previous term. As a consequence we have shown that
\begin{align*}
\sup_{x\neq x'} |(\Delta_N^{(l)} (x)-\Delta_N^{(l)}(x')) (x-x')^{-\alpha_1}| = o_{P_\infty} (1),
\end{align*}
implying that $\Delta_N^{(l)}$ is $\alpha_1$-Holder (with constant $M_1$) with probability going to $1$.

\noindent \textit{Proof of (\ref{step2thnp}).} 
For the first statement, using (\ref{step1thnp}), it suffices to show that $\widehat {N} $ lies in $\mathcal V[-M_2,M_2] $ with probability going to $1$. Lemma \ref{lem:talagrand} and Condition (c\ref{as:smoothness}) yield 
\begin{align*}
|\widehat {N}(x)|&\leq  |E\widehat {N}(x)|  +\sup_{x\in \mathcal Q}|\widehat \Delta_N(x)|\\
&=  | \int N_{\w \beta^{(0)}}(x-hu)K(u) du|  +o_p(1)\\
&\leq M_1 \int |K(u)| du +o_p(1)\\
&\leq 2M_1 \int |K(u)| du \qquad \text{with probability going to }1.
\end{align*}  
For the second statement, it suffices to show that $\widehat {D} $ lies in $\mathcal V[cb/2,M_2] $ with probability going to $1$. To obtain the upper bound for this class we mimic what have been done before for $E\widehat {N}$. To obtain the lower bound, first write
\begin{align*}
E\widehat {D}(x) - (D_0\ast K_h)(x) 
&= \int (D_{\w \beta^{(0)}}(x-hu)-D_{ \beta_0}(x-hu))K(u) du,
\end{align*}
by Condition (c\ref{as:smoothness}), it goes to $0$ uniformly over $x\in \mathcal Q$. Then this yields
\begin{align*}
\widehat D(x) &= (D_0\ast K_h) (x) + E\widehat D(x) -  (D_0\ast K_h) (x) +\widehat \Delta_D(x)\\
&\geq  (D_0\ast K_h) (x) -\sup_{x\in \mathcal Q} |(D_0\ast K_h) (x)-  E\widehat D(x) | -\sup_{x\in \mathcal Q} |\widehat \Delta_D(x)|\\
&=  (D_0\ast K_h) (x) - o_{P_\infty} (1).
\end{align*}
Define $b(x,h)=\inf_{y\in \mathcal Q , \ |y-x|\leq hA } D_0(y) $ and $M(x,h)=\sup_{y\in \mathcal Q ,\ |y-x|\leq hA} D_0(y) $ where $A$ is such that $K(u)\mathbb I _{\{|u|> A\}}=0$ ($A$ is finite because $K$ is compactly supported), and note that, by the uniform continuity of $D_0$, $\sup_{x\in \mathcal Q} |M(x,h)-b(x,h)|\rightarrow 0 $ as $h\rightarrow 0$, it follows that
\begin{align*}
(D_0\ast K_h) (x)&=\int D_0(x+hu)  K(u) du \\
&\geq b(x,h) \int \mathbb I_{\{x+hu\in \mathcal Q \}} \{K(u)\}_+ du +M(x,h) \int \mathbb I_{\{x+hu\in \mathcal Q \}} \{K(u)\}_- du\\
&= b(x,h) \int \mathbb I_{\{x+hu\in \mathcal Q \}} K(u) du +(M(x,h)-b(x,h)) \int \mathbb I_{\{x+hu\in \mathcal Q \}} \{K(u)\}_- du\\
&\geq  b(x,h) \int \mathbb I_{\{x+hu\in \mathcal Q \}} K(u) du -o(1)\\
& \geq b \int _{ \{(\mathcal Q-x)/h\} }  K(u) du- o(1),
\end{align*}
that is greater than $c b/2>0$ whenever $n$ is large enough.

\noindent \textit{Proof of (\ref{step3thnp}).} The bound on the bracketing numbers of the associated class is obtained as follows. First, Corollary 2.7.2, page 157, in \cite{wellner1996} states that
\begin{align*}
\log \mathcal N_{[\ ]} (\epsilon ,\mathcal C_{k_1+\alpha_1 ,M_1}(\mathcal Q), \|\cdot\| _{\infty})\leq \text{const.} \epsilon^{- q/(k_1+\alpha_1) },
\end{align*}
where $\text{const.}$ depends only on $\mathcal Q$, $k_1+\alpha_1$ and $M_1$. Second by Assumption (c\ref{as:smoothness}), 
\begin{align*}
|\int N_{\beta}(x-hu) - N_{\beta'}(x-hu)  K(u) du|\leq |\beta-\beta'|^{\alpha_2} M_1 \int |K(u)| du, \\
|\int D_{\beta}(x-hu) - D_{\beta'}(x-hu)  K(u) du|\leq |\beta-\beta'|^{\alpha_2} M_1 \int |K(u)| du, 
\end{align*}
hence, the classes $\mathcal E_{N,n} $ and $\mathcal E_{D,n} $ have a $\|\cdot\|_\infty$-bracketing numbers that are smaller than the $|\cdot|$-covering numbers of $\mathcal B_0$ (up to some multiplicative constant) that are obviously smaller than the $\|\cdot\|_\infty$-bracketing numbers of $\mathcal C_{k_1+\alpha_1 ,M_1}(\mathcal Q)$. Since the class $ \mathcal A_{1,n}$ (resp. $\mathcal A_{2,n}$) coincides with the set $\mathcal C_{k_1+\alpha_1 ,M_1}(\mathcal Q)$ plus $\mathcal E_{N,n} $ (resp. $\mathcal E_{D,n} $), the bracketing numbers of $ \mathcal A_{1,n}$ (resp. $ \mathcal A_{2,n}$) are then smaller than the square of the bracketing number of $\mathcal C_{k_1+\alpha_1 ,M_1}(\mathcal Q)$, i.e., 
\begin{align*}
\log \mathcal N_{[\ ]} (\epsilon ,\mathcal A_{j,n}, \|\cdot\|_\infty)\leq \text{const.} \epsilon^{-q/(k_1+\alpha_1) },
\end{align*}
for $j=1,2$. Let $[\underline{N}_1,\overline N_1],\ldots,[\underline{N}_{n_1},\overline N_{n_1}]$ (resp.  $[\underline{D}_1,\overline D_1],\ldots,[\underline{D}_{n_2},\overline D_{n_2}]$) be $\|\cdot\|_\infty$-brackets of size $\epsilon$ that cover $ \mathcal A_{1,n}$ (resp. $ \mathcal A_{2,n}$). Clearly, by taking $\underline D_1\vee b,\ldots, \underline D_{n_2}\vee b$ in place of $\underline D_1,\ldots,\underline D_{n_2}$, we can assume that the elements of the brackets of $\mathcal A_{2,n}$ are larger than or equal to $b$. By a similar argument, every brackets of $\mathcal A_{1,n}$ are bounded by $M_2$. Then for any $N\in \mathcal A_{1,n}$ and $D\in \mathcal A_{2,n}$, there exists $1\leq i\leq n_1$ and $1\leq j\leq n_2$ such that
\begin{align*}
&\frac{\underline N_i }{\overline D_j }\leq \frac{N}{D }\leq \frac{\overline N_i }{\underline D_j},\\
&\left\|\frac{\underline N_i }{\overline D_j }-\frac{\overline N_i }{\underline D_j} \right\|_\infty \leq \text{const.} \epsilon,
\end{align*}
where $\text{const.}$ is a constant that depends only on $b$ and $M_2$. As a consequence we have exhibited an $\|\cdot\|_\infty$-bracketing of size $\text{const.}\epsilon$ with $n_1n_2$ elements, yielding to the statement of the theorem.


\end{proof}

\begin{remark}\normalfont
On the one hand, no strong assumptions are imposed on the regularity of the targeted functions $N_0$ and $D_0$. Actually, we only require the uniform continuity of $D_0$ to hold. On the other hand, the kernel needs to be many times differentiable. Hence it consists of approximating a function, non necessarily regular, by a smooth function. In this way, we can control the entropy generated by the class of estimated function.
\end{remark}

\begin{remark}\normalfont
Another way to proceed is to consider the classes
\begin{align*}
\mathcal E_{N} = \left \{ x\mapsto \int N_{\beta}(x-hu)K(u) du\ :\ \beta\in \mathcal B_0,\ h>0  \right\},\\
\mathcal E_{D}  = \left \{ x\mapsto \int D_{\beta}(x-hu)K(u) du\ :\ \beta\in \mathcal B_0,\ h>0  \right\},
\end{align*}
in place of $\mathcal E_{N,n}$ and $\mathcal E_{D,n}$. The resulting classes are larger but they no longer depend on $n$. To calculate the bracketing entropy of the spaces $\mathcal E_{N}$ and $\mathcal E_{D}$, one might consider the $L_r(P)$-metric rather than the uniform metric because the latter involves some difficulties at the boundary points.
\end{remark}

\begin{remark}\normalfont
The assumptions on the Kernel might seem awkward in the first place. Examples of kernels that satisfy the last requirement in  (c\ref{as:kernel}) are given in \cite{nolan1987} (Lemma 22), see also \cite{gine2002}. An interesting fact is that  $\{ \widetilde K\left(\frac{x-\cdot}{h}\right)\ : \ h>0, \ x\in \mathbb R  \}$ is a uniformly bounded VC class of function, whenever $\widetilde K$ has bounded variations. The assumption that $\int _{ \{(\mathcal Q-x)/h\} }  K(u) du\geq c>0$ holds true if $\mathcal Q$ is a smooth surface, i.e., the distance between $x\in \R^q$ and $Q$ is a differentiable function of $x$.
Note also that in the one-dimensional case, it is always verified. Moreover, this condition permits to include the case of non-smooth surfaces such as cubes.
\end{remark}


\subsection{Semiparametric estimation of \texorpdfstring{$w_0$}{%
a }%
}\label{s34}

The nonparametric approach involves a smoothing in the space $\mathbb R^q$. It is well known that the smaller the dimension $q$, the better the estimation. Although it does not affect the the efficiency of the estimators (any rate of convergence of $\w w$ to $w_0$ satisfies condition (b\ref{condbprimeoptimality}')), it certainly influence the small sample size performance.

There exist different ways to introduce a semiparametric procedure to estimate $w_0$. We rely on the single index model. In our initial regression model (\ref{regressionmodel}), the conditional mean of $Y$ given $X$ depends only on $\beta_{02}^T X$. Given this, it is slightly stronger to ask that the conditional law of $Y$ given $X$ is equal to the conditional law of $Y$ given $\beta_{02}^TX$, in other words, that
\begin{align}\label{condindepconditional}
Y\indep X| \beta_{02}^TX,
\end{align}
or equivalently that,
\begin{align*}
E(g(Y)|X) = E(g(Y)|\beta_{02}^TX),
\end{align*}
for every bounded measurable function $g$. Such an assumption has been introduced in \cite{li1991} to estimate the law of $Y$ given $X$. Here (\ref{condindepconditional}) is introduced in a different spirit than in \cite{li1991}: since we have already assumed a parametric regression model in (\ref{regressionmodel}), condition (\ref{condindepconditional}) is an additional mild requirement, that serves only the estimation of $w_0$. The calculation of semiparametric estimators of $w_0$ is done by using similar tools as in the previous section. In order to fully benefit from Condition (\ref{condindepconditional}), we realize the smoothing in a low-dimensional subspace of $\R^q$. We define the estimator by
\begin{align*}
\w w(x) = \frac{\w N_{ \w \beta^{(0)} }(\widehat \beta^{(0)T}_2x)}{\w D_{\w \beta^{(0)}}(\widehat \beta^{(0)T}_2 x)},
\end{align*}
where 
\begin{align*}
&\widehat {N}_{ \beta}(t) = n^{-1} \sum_{i=1}^n  g_{2, \beta}(Y_i,X_i)  L_h(t-\beta_2^TX_i),\\
&\widehat D_{ \beta }(t) = n^{-1} \sum_{i=1}^n g_{1, \beta}(Y_i,X_i) L_h(t-\beta_2^TX_i),
\end{align*}
with $ L_h(\cdot) =h^{-1}L(\cdot / h)$. The proofs are more involved than in the nonparametric case because of the randomness of the space generated by $\w \beta^{(0)}_2$ in which the smoothing is realized .

\section{Simulations}\label{s4}

The asymptotic analysis conducted in the previous sections demonstrate that the estimation of $w_0$ does not matter provided its consistency, e.g., $\w w_1$ and $\w w_2$ can converge to $w_0$ with different rates, while $\w \beta(\w w_1)$ and $\w \beta(\w w_2)$ are asymptotically equivalent. Nevertheless when the sample size is not very large, differences might arise between the procedures. In the next we consider the three approaches investigated in the latter section, namely parametric, nonparametric and semiparametric. Each of these procedures results in different rates of convergence of $\w w$ to $w_0$. Here our purpose is two folds. First to provide a clear picture of the small sample size performances of each method, second, from a practical point of view, to analyse the relaxation of the smoothness properties of $w_0$.

We consider the following heteroscedastic linear regression model. Let the sequence $(X_i,Y_i)_{i=1,\ldots,n}$ be independent and identically distributed such that
\begin{align*}
Y_1= \beta_{01}+ \beta_{02}^TX_1+\sigma_0(X_1) \epsilon_1,
\end{align*}
where $(X_1,e_1)\in \R^{q+1}$ has a standard normal distribution and $(\beta_{01},\beta_{02})= (1,\ldots, 1)/\sqrt {q+1}$. The weighted least-square estimator is given by
\begin{align}\label{eq:betaWLS}
\w \beta(w) = \w \Sigma(w)^{-1}\w \gamma(w),
\end{align}
with $\w \Sigma(w)=n^{-1}\sum_{i=1}^n \widetilde X_i\widetilde X_i^T w(X_i) $, $\w \gamma(w) =n^{-1}\sum_{i=1}^n \widetilde X_iY_iw(X_i)$ and $\widetilde X_i^T= (1,X_i^T)$, for $i=1,\ldots, n$. Let $ \w \beta^{(0)}=(\w \beta^{(0)}_1,\w \beta^{(0)}_2) $, with $\w \beta^{(0)}_1\in \R$ and $\w \beta^{(0)}_2\in \R^q$, denote the first step estimator with constant weights. For different sample sizes $n$ and also several dimensions $q$, we consider two values for $\sigma_0$: one smooth function and one discontinuous function, given by, respectively,
\begin{align*}
\sigma_{01}(x) =\frac{\beta_{02}^Tx}{|\beta_{02}|} \qquad \t{and}\qquad \sigma_{02}(x) = \frac 1 2 +2\cdot \mathds 1_{\{\beta_{02}^Tx>0\}}.
\end{align*}
%
In each case, the optimal weight function $w_0=1/\sigma_{0k}^{2}$ is estimated by these methods:
\begin{enumerate}[(i)]
\item parametric, $\w w$ is computed using $\w \beta^{(0)} $ in place of $\beta_0$ in the formula of $w_0$, 
\item nonparametric, applying a kernel smoothing procedure (of the Nadaraya-Watson type), $\w w$ is given by
\begin{align*}
\frac{\sum_{i=1}^n  K_h(X_i-x)}{\sum_{i=1}^n (Y_i -\w \beta_{1}^{(0)}  -\w \beta_2^{(0)T}  X_i )^2 K_h(X_i-x)},
\end{align*}
\item semiparametric, applying a kernel smoothing procedure in a reduced sample-based space, $\w w$ is given by
\begin{align*}
\frac{ \sum_{i=1}^n K_h(\w A(X_i-x))}{ \sum_{i=1}^n (Y_i -\w \beta_{1}^{(0)}  -\w \beta_2^{(0)T} X_i )^2 K_h(\w A(X_i-x))} ,
\end{align*}
where $\w A = \w P^{(0)}_2+ \epsilon  I $ and $\w P^{(0)}_2$ denotes the orthogonal projector onto the linear space generated by $\w \beta^{(0)}_2$.
\end{enumerate}
For (ii) and (iii), the kernel $K$ is the Epanechnikov kernel given by $K(u) = c_q (1-|u|^2)_+ $, where $c_q$ is such that $\int K(u) du=1$. Whenever $\w w$ is computed according to one of the method (i), (ii) or (iii), the final estimator of $\beta_0$ is computed with $\w \beta(\w w)$ given by (\ref{eq:betaWLS}).

As highlighted in the previous section, the bandwidth $h$ needs to be chosen in such a way that $\w w$ is smooth. In practice, we find that choosing $h$ by cross validation is reasonable. More precisely, we consider the estimation of $\sigma_0^2(x)$ by $\w \sigma^2(x)$, i.e.,
\begin{align*}
h_{cv} = \argmin_{h>0} \sum_{i=1}^n (  (Y_i-\w \beta_{1}^{(0)}  -\w \beta_2^{(0)T} X_i)^2 - \w \sigma^{2(-i)}(X_i) )^2,
\end{align*}
where $ \w \sigma^{2(-i)}(x)$ is either the leave-one-out nonparametric estimator of $\sigma_0^2(x)$ given by (ii) or the leave-one-out semiparametric estimator of $\sigma_0^2(x)$ given by (iii). Such a choice of $h$ has the advantage to select automatically the bandwidth without having to take care of the respective dimensionality of each procedure semi- and nonparametric. In every examples, the semiparametric $h_{cv}$ was smaller than the nonparametric $h_{cv}$.

For the semiparametric method, the matrix $\w A$ denotes the orthogonal projector onto the space generated by $\w \beta_0$ perturbed by $\epsilon$ in the diagonal. This permits not to have a blind confidence in the first estimator $\w\beta_0$ accounting for variations of $w_0$ in the other directions. Hence $\epsilon$ is reasonably selected if $\epsilon I$ has the same order as the error $\w P_{2}^{(0)}-P^{(0)}_2$, where $P^{(0)}_2$ is the orthogonal projector onto the linear space generated by  $\beta_2^{(0)}$. Computed with the Frobenius norm $|\cdot |_F$, it yields
\begin{align*}
|\w P_{2}^{(0)}- P_{2}^{(0)}|_F^2 = {2  \text{trace}( (I- P_{2}^{(0)}) \w P_{2}^{(0)} ) }{}= \frac{2  | (I- P_{2}^{(0)}) \w  \beta_2^{(0)} |^2 }{|\w \beta^{(0)}_2|^2}.
\end{align*}
The numerator is approximated by an estimator of the average value of its asymptotic law (in the case where $\epsilon \indep X$), it gives $2\w \sigma^2n^{-1/2} \sum_{k=1}^q \w \lambda_k^2  $ where $\w \lambda_k$ are the eigenvalues associated to the matrix $(I-\w P_{2}^{(0)})\w \Sigma^{-1}_{2}  (I-\w P_{2}^{(0)}) $, $\w \sigma^2 = n^{-1}\sum_{i=1}^n (Y_i-\w \beta_0-\w\beta_0^T X_i)^2$ and $\Sigma^{-1}_{2}$ denotes the $q\times q$ lower triangular block of the inverse of $n^{-1} \sum_{i=1}^n \widetilde X_i\widetilde X_i^T$. As a consequence $\epsilon$ is given by
\begin{align*}
\epsilon = \sqrt{\frac{2\w \sigma^2 \sum_{k=1}^q \w \lambda_k^2}{nq|\w \beta|^2 }}  ,
\end{align*}
where $\sqrt q$ appears as a normalizing constant.

\begin{figure}\centering
\includegraphics[width=5.2cm,height=8.5cm]{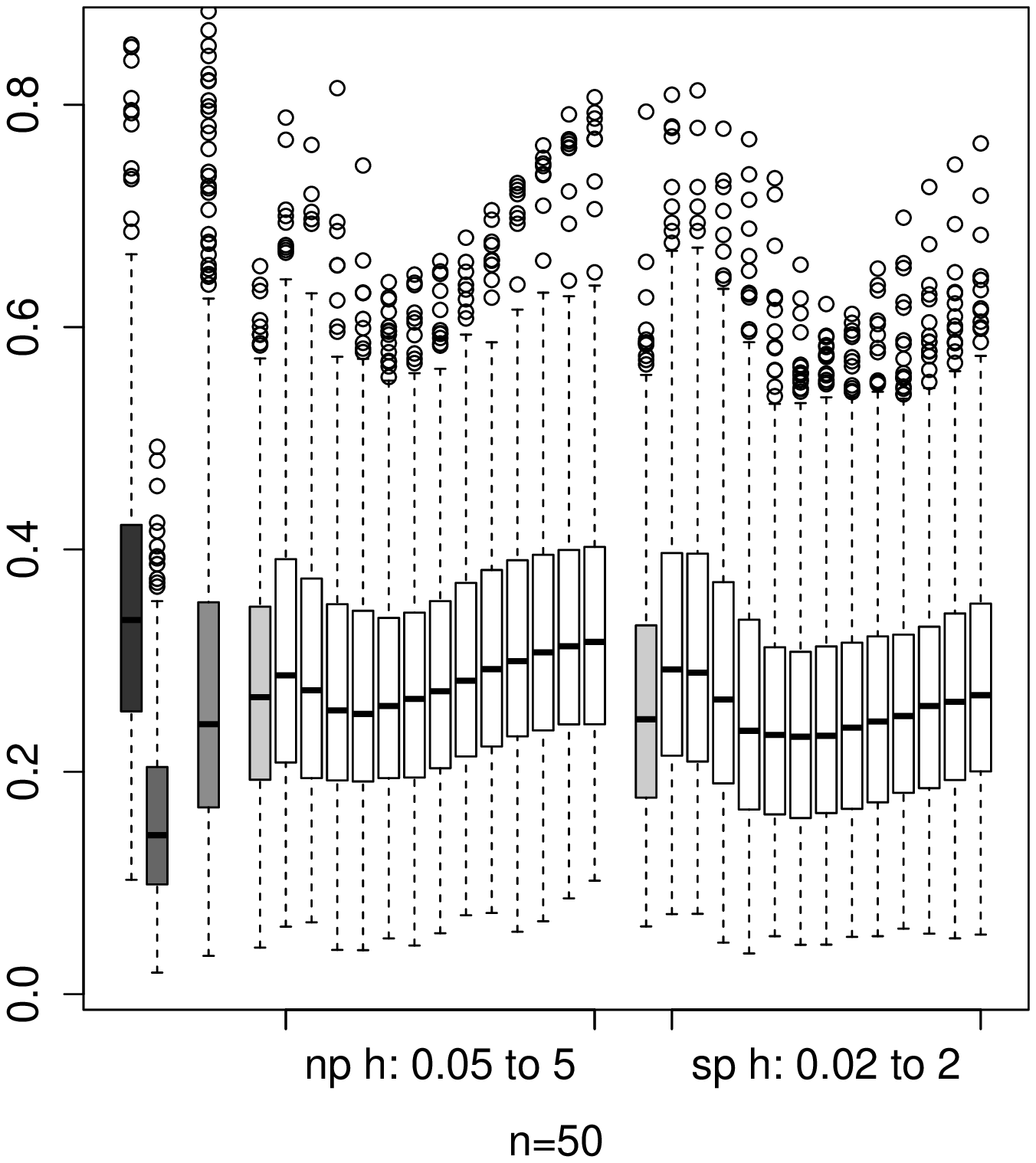} 
\includegraphics[width=5.2cm,height=8.5cm]{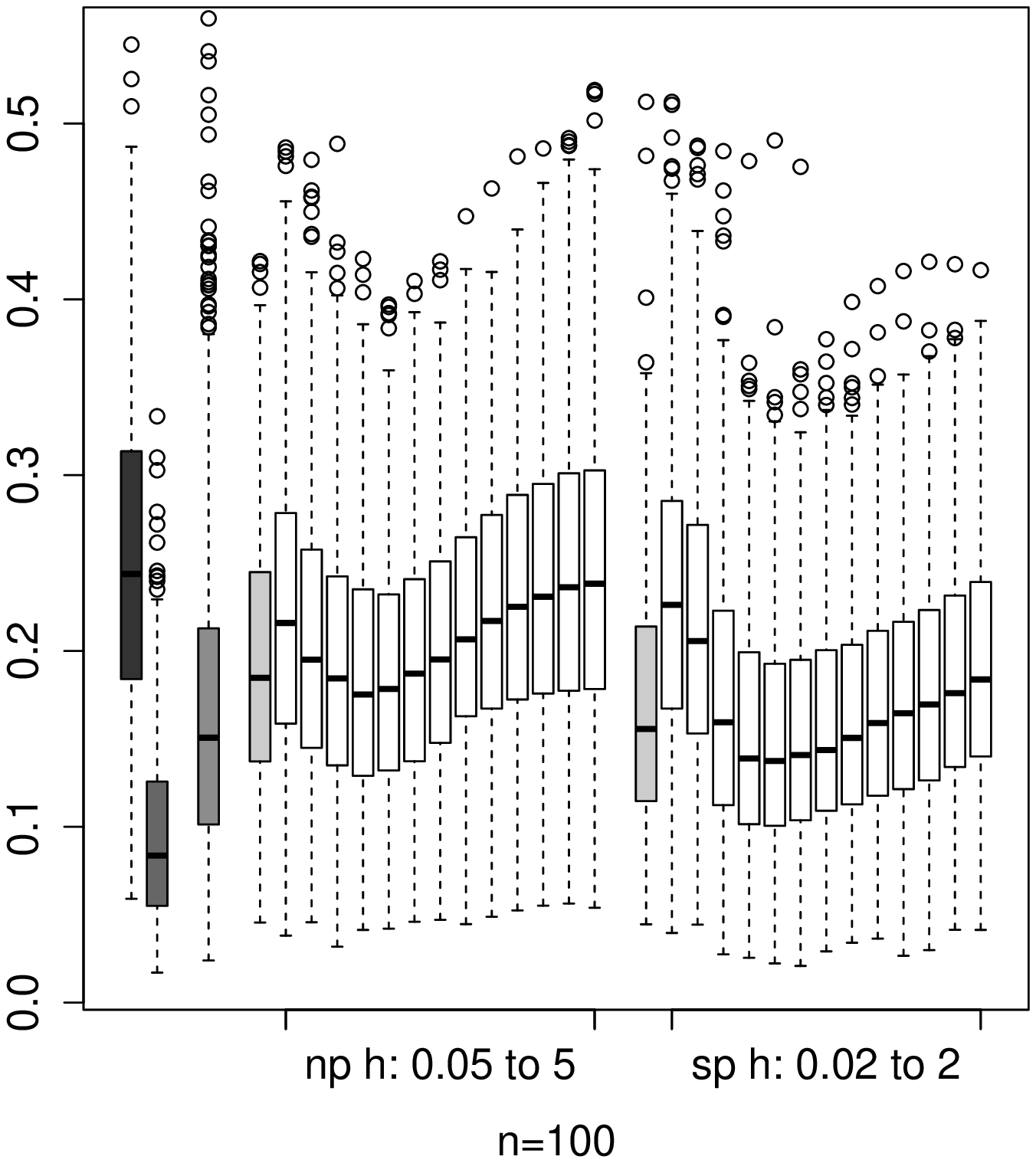} 
\includegraphics[width=5.2cm,height=8.5cm]{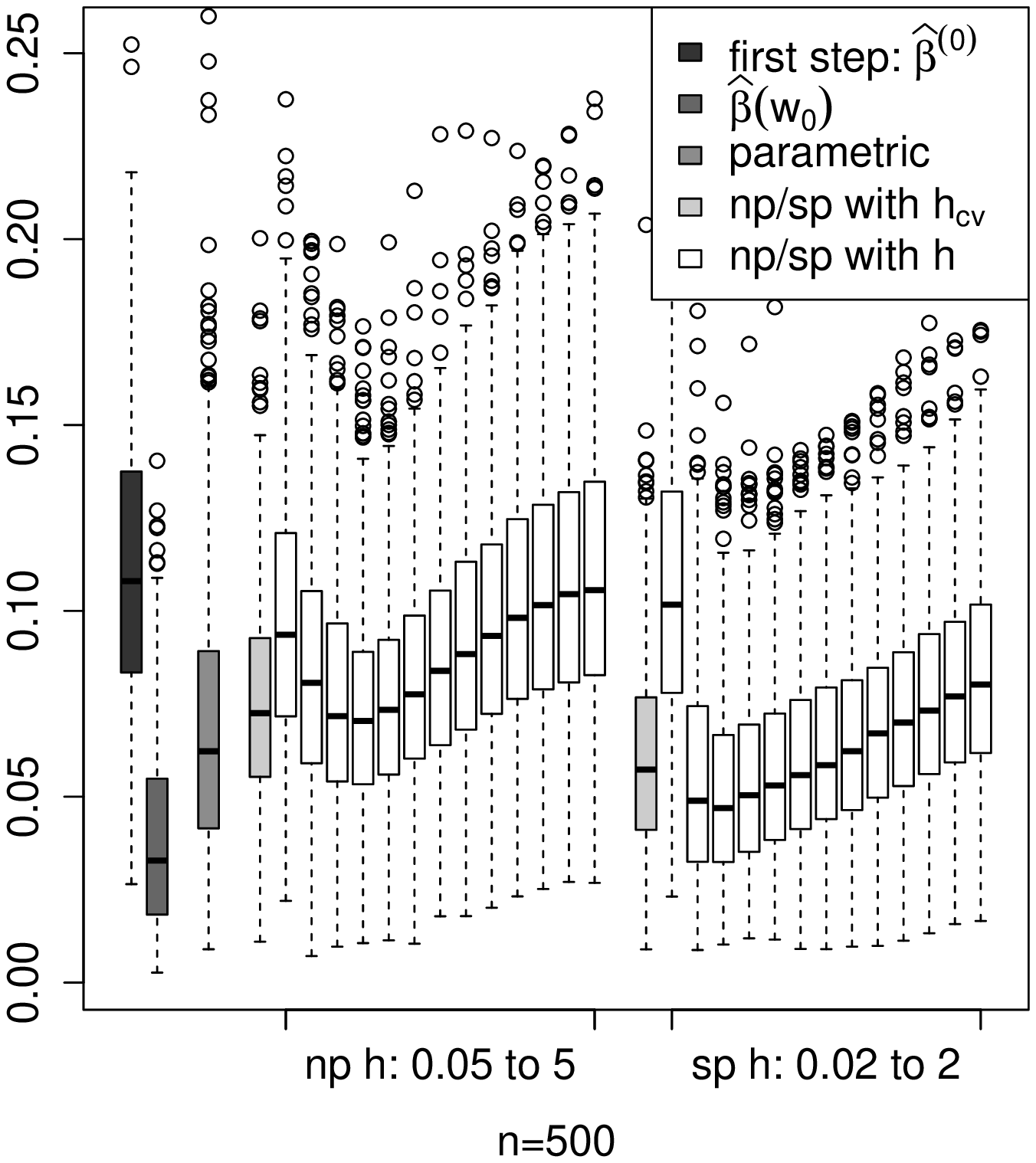} 
\caption{Each boxplot is based on $500$ estimates of $|\w \beta-\beta_0|^2$ when $q=4$ and $\sigma_{0}(x) =\frac{\beta_{02}^Tx}{|\beta_{02}|}$. The parametric, nonparametric (np) and semiparametric (sp) approaches are respectively based on (i), (ii) and (iii).}
\label{simu1}
\end{figure}

\begin{figure}\centering
\includegraphics[width=5.2cm,height=8.5cm]{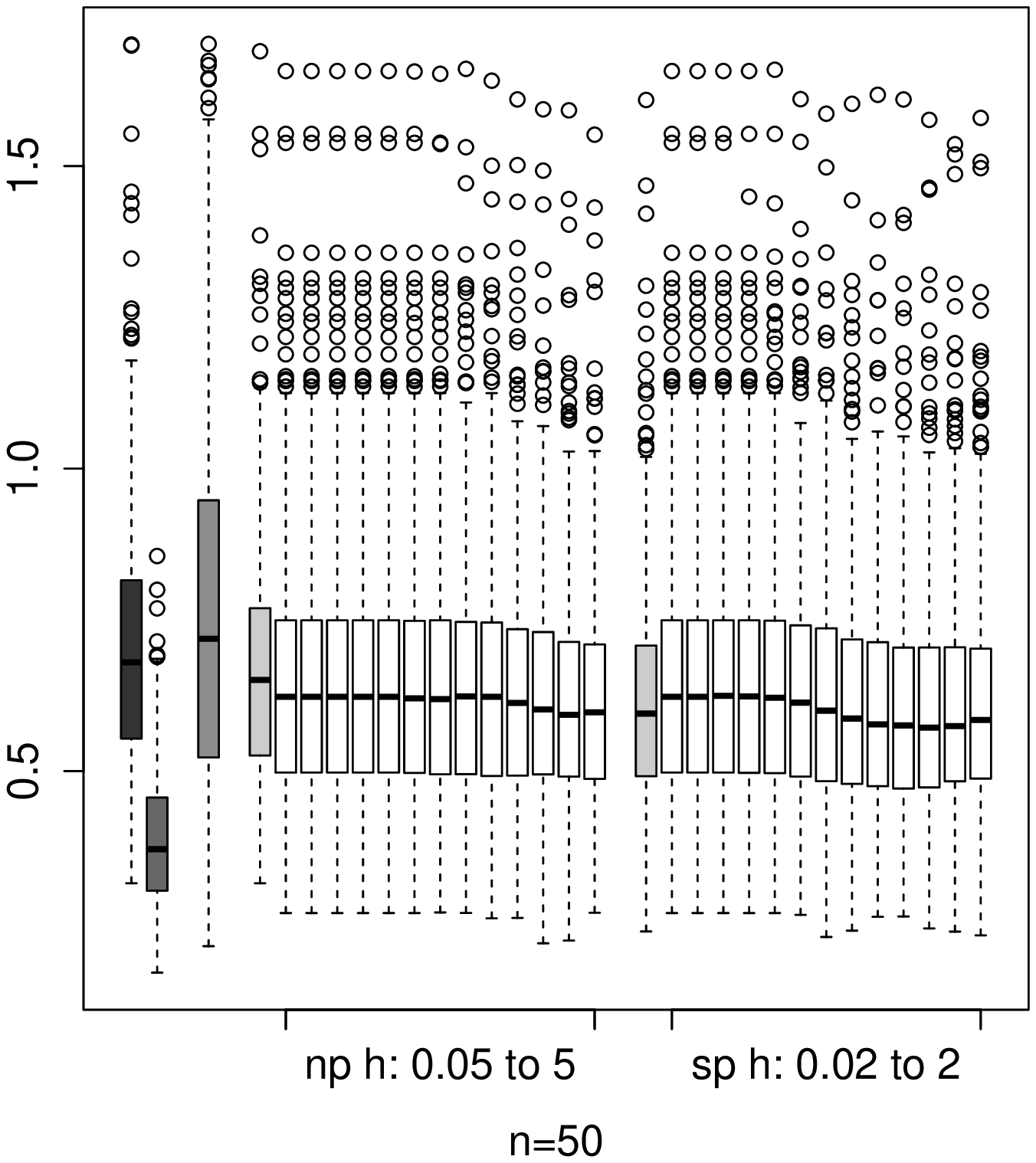} 
\includegraphics[width=5.2cm,height=8.5cm]{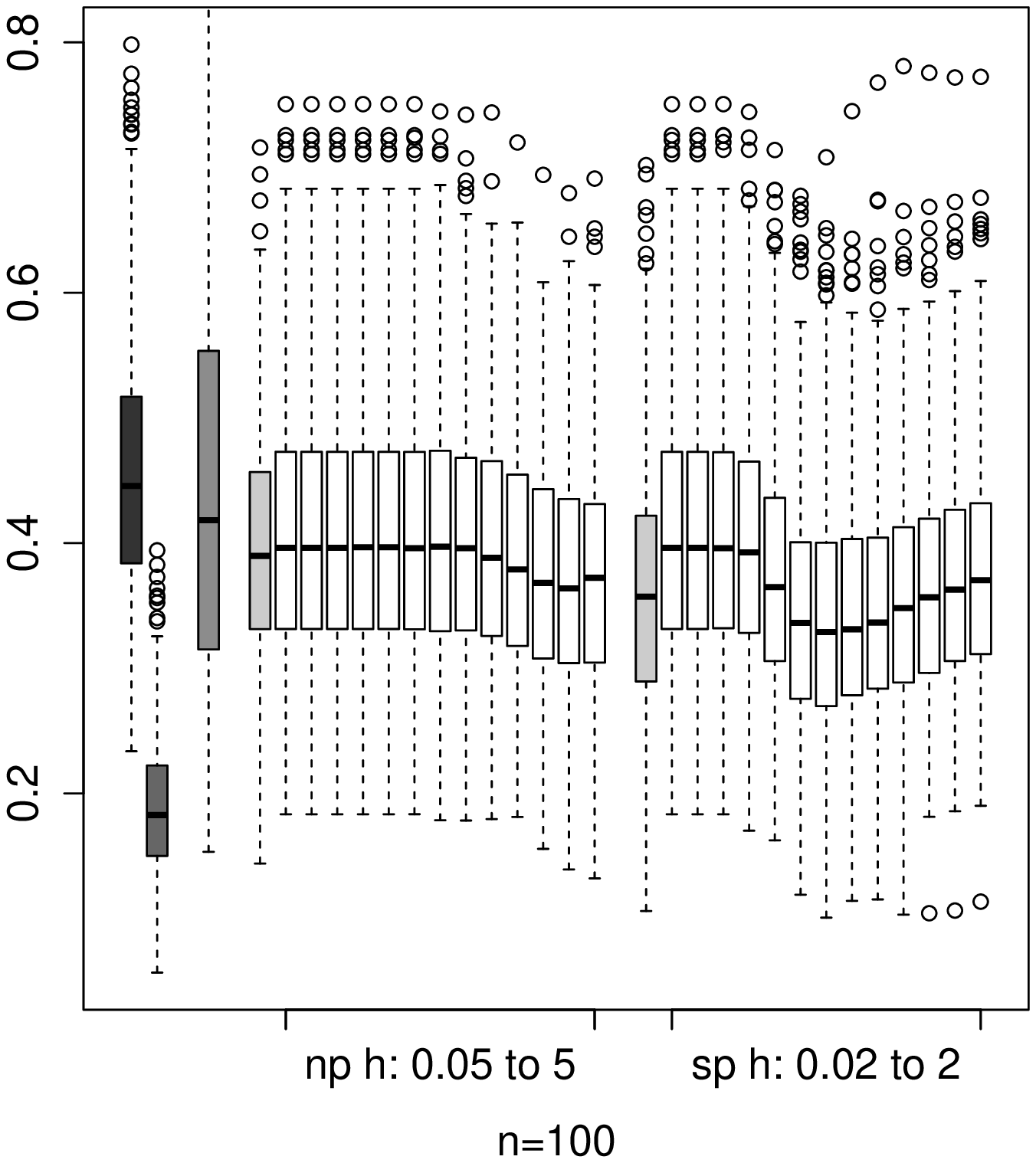} 
\includegraphics[width=5.2cm,height=8.5cm]{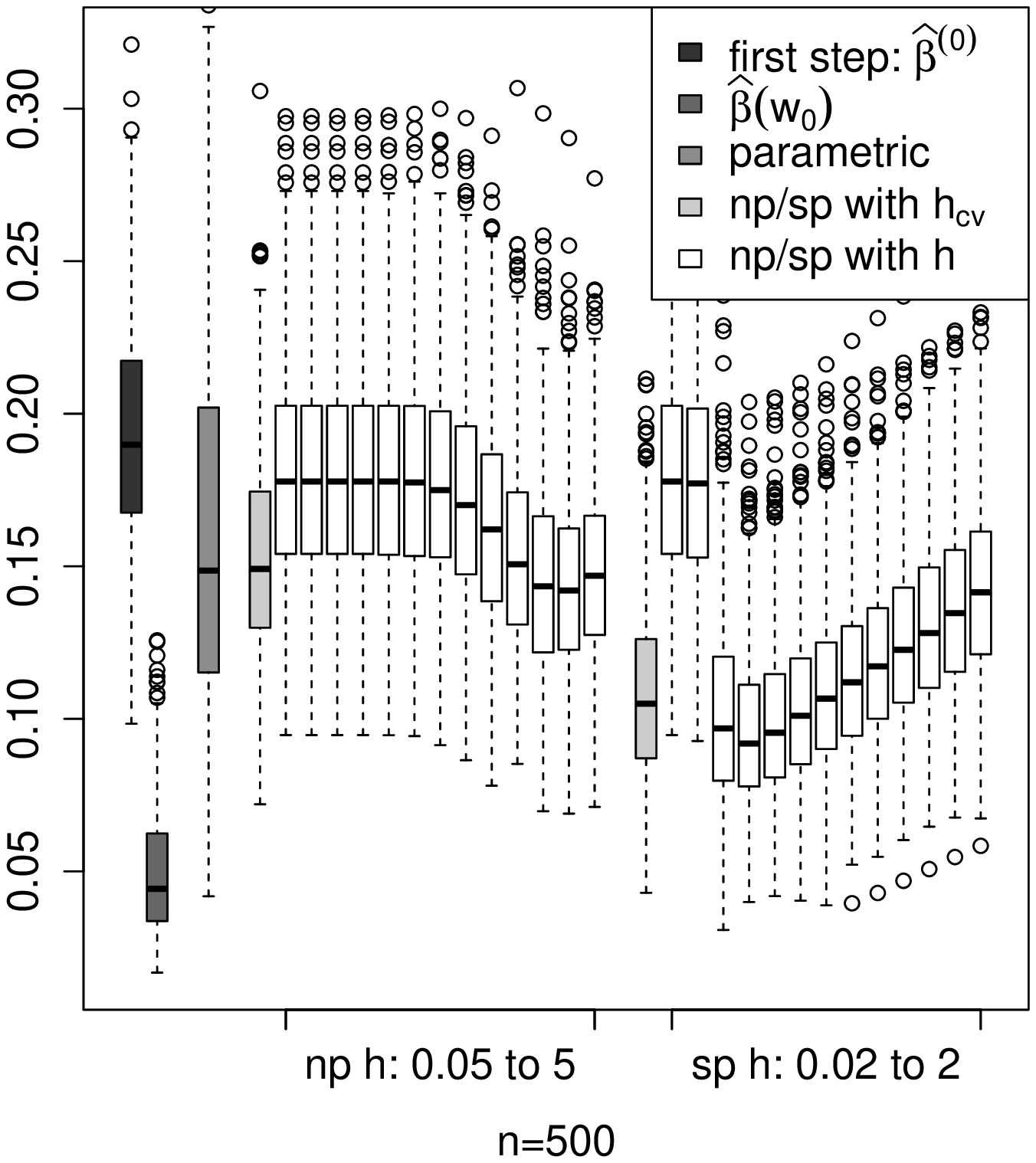} 
\caption{Each boxplot is based on $500$ estimates of $|\w \beta-\beta_0|^2$ when $q=16$ and $\sigma_{0}(x) =\frac{\beta_{02}^Tx}{|\beta_{02}|}$. The parametric, nonparametric (np) and semiparametric (sp) approaches are respectively based on (i), (ii) and (iii).}
\label{simu2}
\end{figure}

\begin{figure}\centering
\includegraphics[width=5.2cm,height=8.5cm]{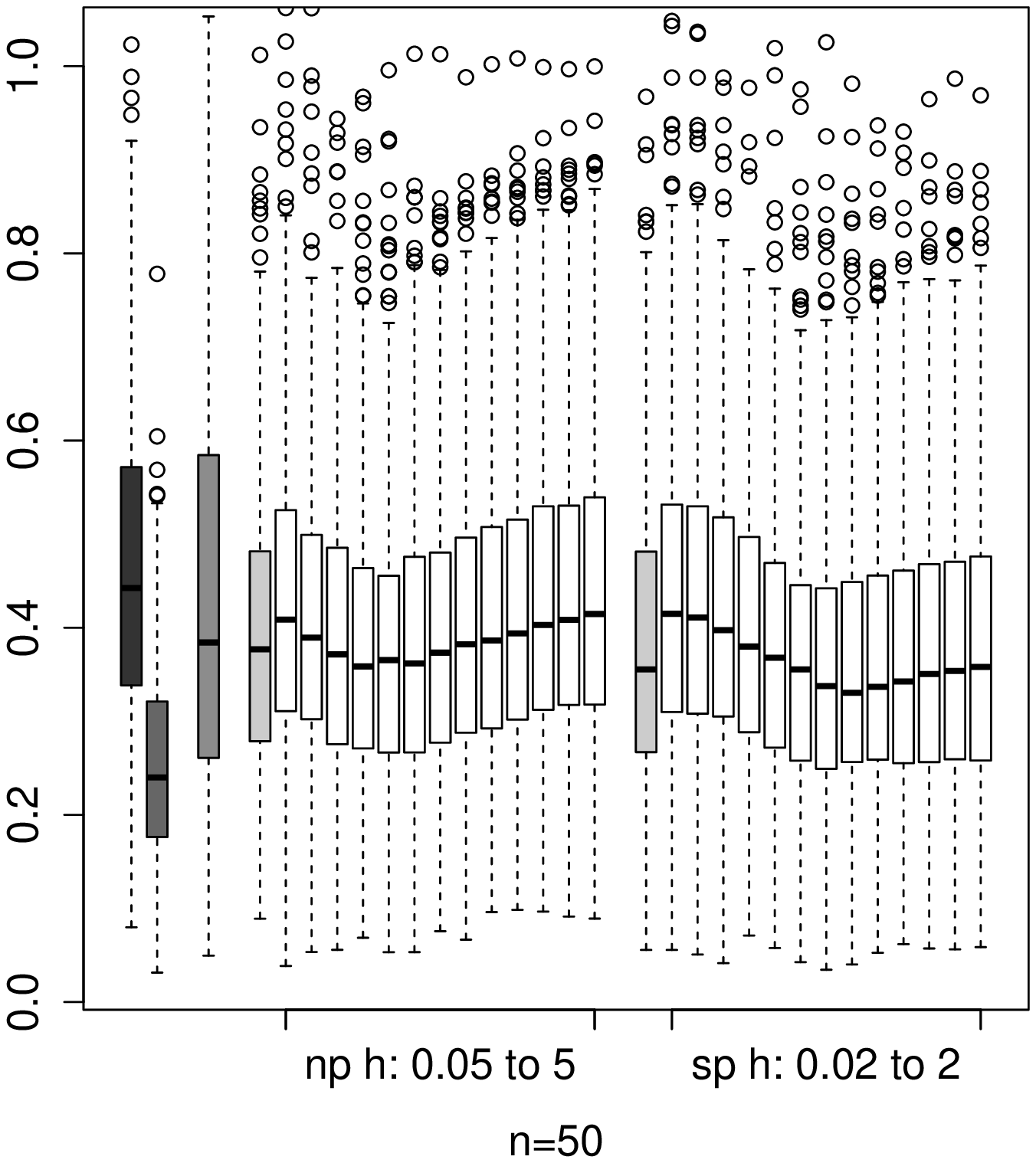} 
\includegraphics[width=5.2cm,height=8.5cm]{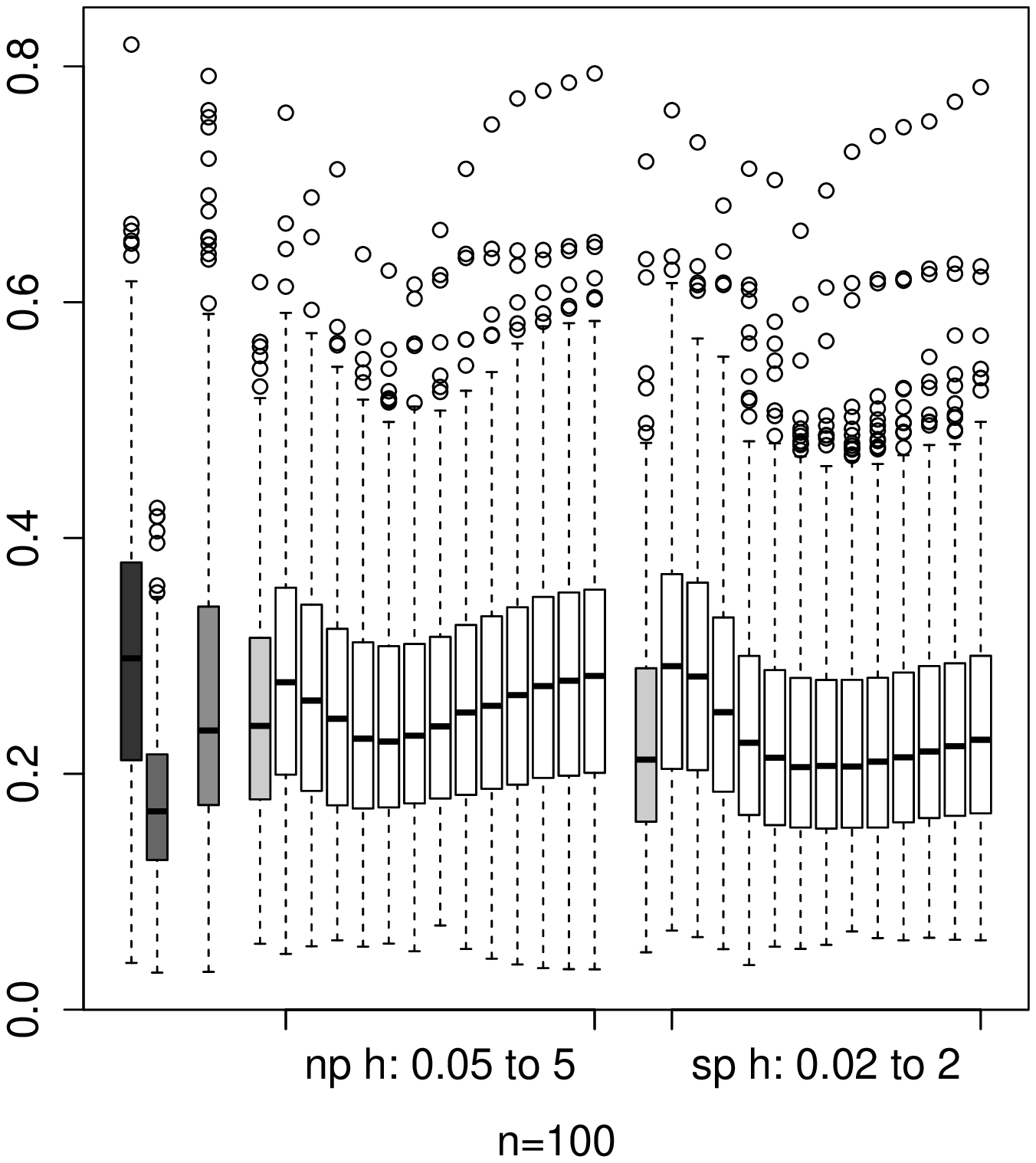} 
\includegraphics[width=5.2cm,height=8.5cm]{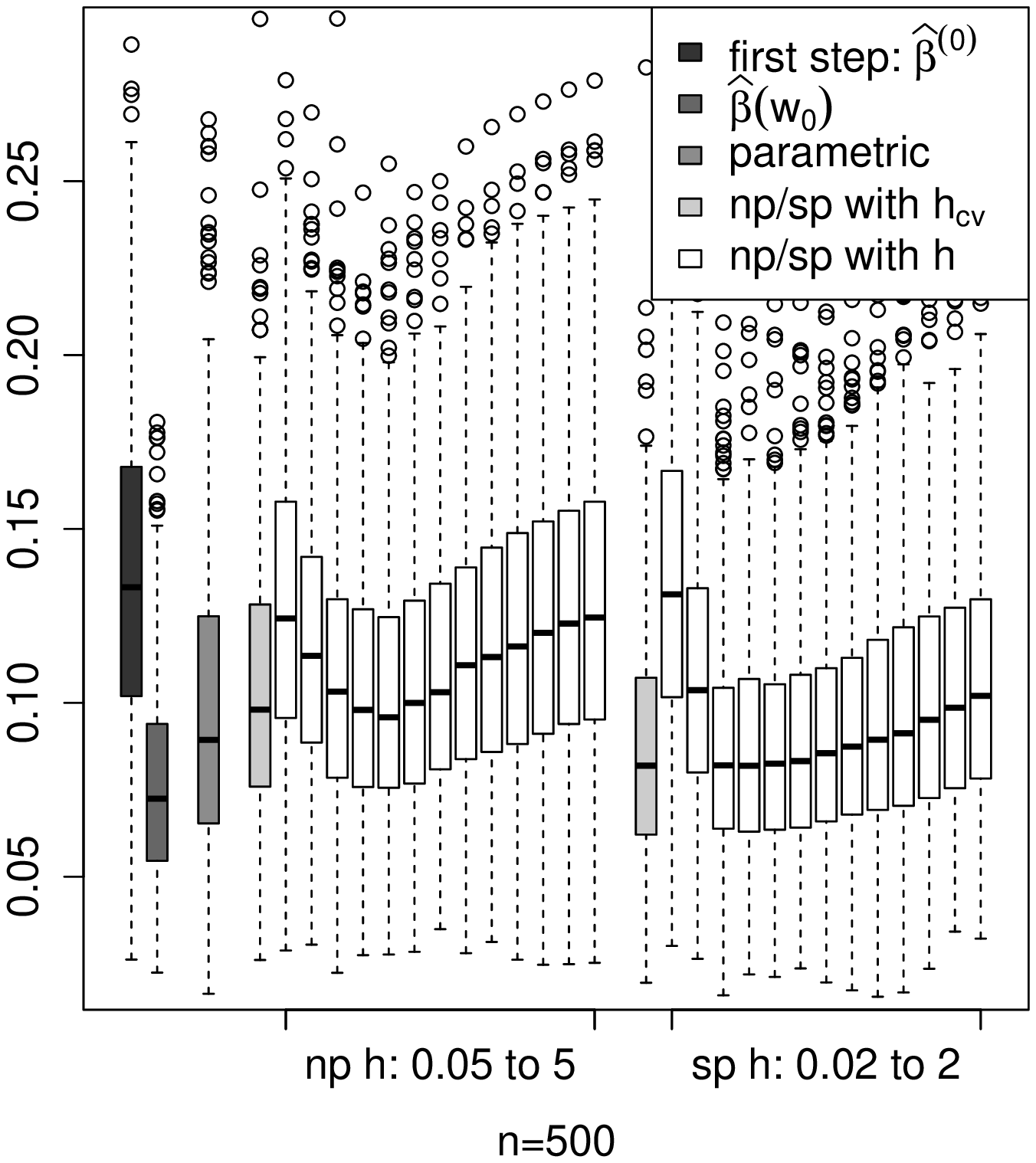} 
\caption{Each boxplot is based on $500$ estimates of $|\w \beta-\beta_0|^2$ when $q=4$ and $\sigma_{0}(x) =\frac 1 2+2\cdot \mathds 1_{\{\beta_{02}^Tx>0\}}$. The parametric, nonparametric (np) and semiparametric (sp) approaches are respectively based on (i), (ii) and (iii).}
\label{simu3}
\end{figure}

\begin{figure}\centering
\includegraphics[width=5.2cm,height=8.5cm]{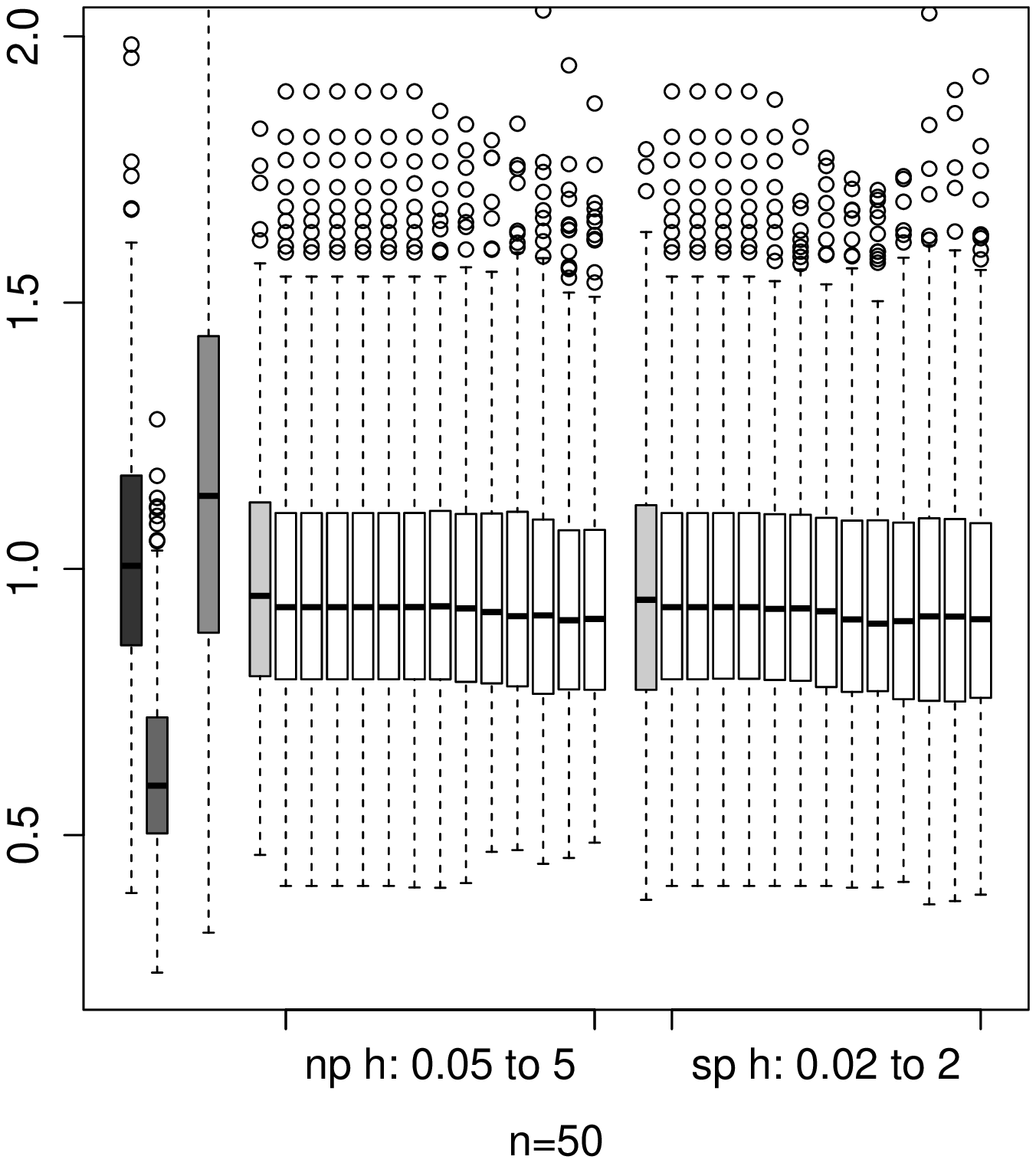}
\includegraphics[width=5.2cm,height=8.5cm]{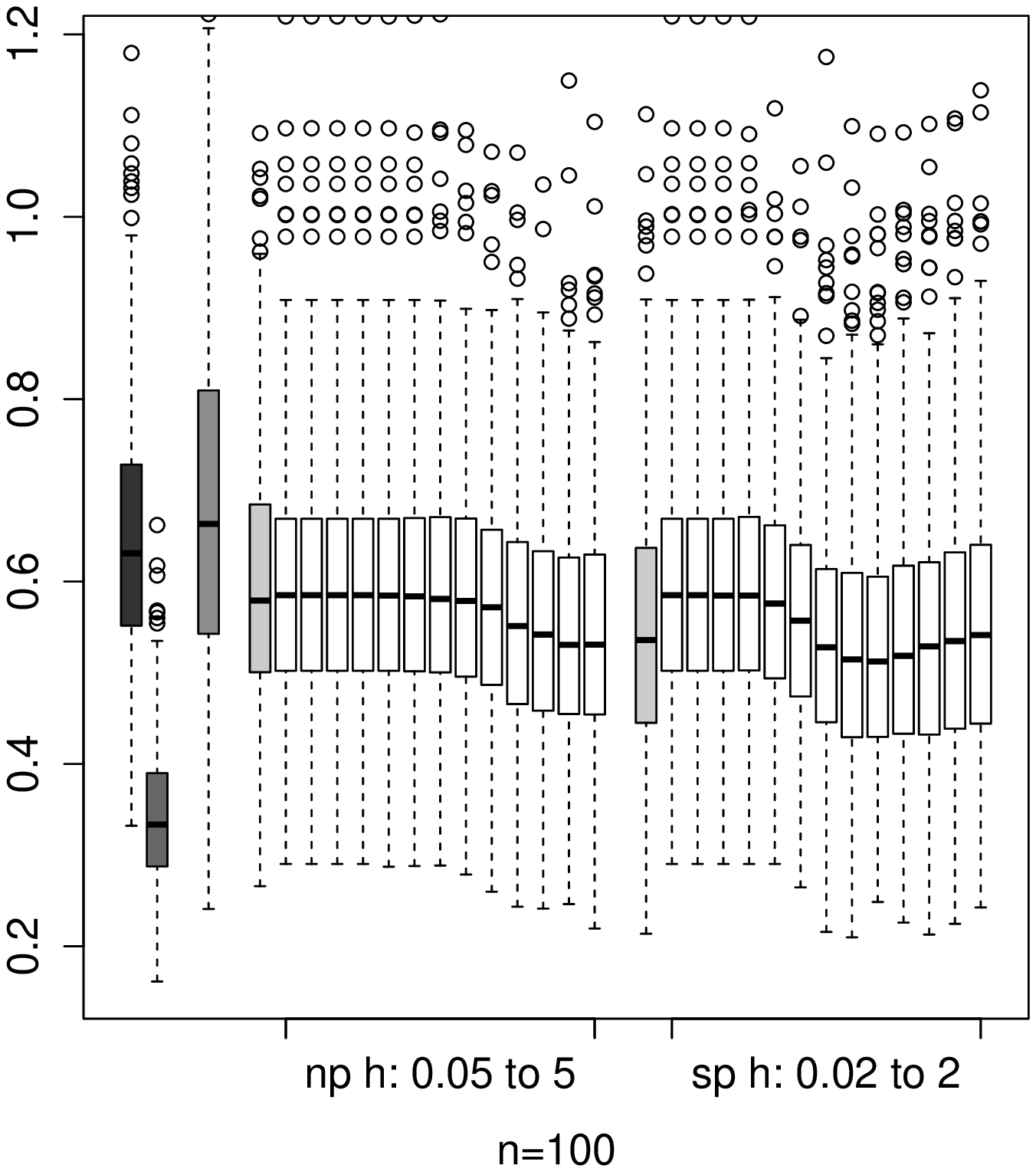} 
\includegraphics[width=5.2cm,height=8.5cm]{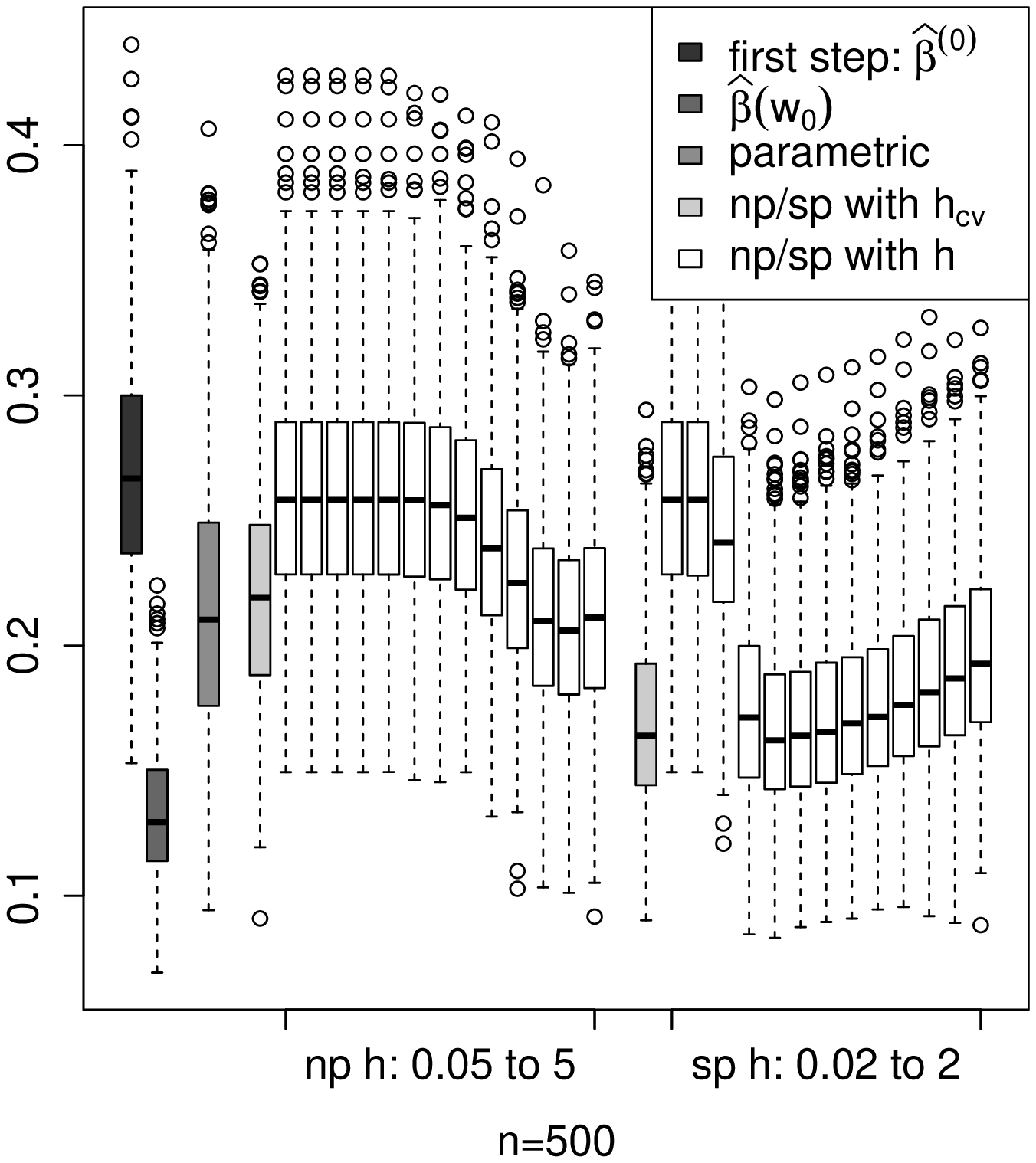} 
\caption{Each boxplot is based on $500$ estimates of $|\w \beta-\beta_0|^2$ when $q=16$ and $\sigma_{0}(x) =\frac 1 2+2\cdot \mathds 1_{\{\beta_{02}^Tx>0\}}$. The parametric, nonparametric (np) and semiparametric (sp) approaches are respectively based on (i), (ii) and (iii).}
\label{simu4}
\end{figure}

The figures \ref{simu1} to \ref{simu4} gives boxplots associated to the estimation error of each method, parametric (i), nonparametric (ii), and semiparamteric (iii), according to different values of $n=50,100,500$, $q=4,16$ and $\sigma_0=\sigma_{01} ,\sigma_{02}$. We also consider the first-step estimator $\w \beta^{(0)}$ and a ``reference estimator" computed with the unknown optimal weights, i.e., $\w \beta(w_0)$. In every case, the accuracy of each method lies between the first step estimator and the reference estimator. In agreement with Theorem \ref{thefficiencyforconditionalmomentrestrictionwithbracketing} and \ref{th:nonparametricclassoffunction}, the gap between the reference estimator and the method (i), (ii), (iii) diminishes as $n$ increases. Each method (i), (ii), (iii), performs differently showing that their equivalence occurs only at very large sample size.

Among the three methods under evaluation (i), (ii), (iii), the clear winner is the semiparametric method (with selection of the bandwidth by cross validation). The fact that it over-rules the nonparametric estimator was somewhat predictable, but the difference in accuracy with the parametric method is surprising. In every situation, the variance and the mean of the error associated to the semiparametric approach are smaller than the variance and the mean of the others. Moreover, the nonparametric method performs as well as the parametric method even in high-dimensional settings. In fact, both approaches are similarly affected by the increase of the dimension. Finally, one sees that the choice of the bandwidth by cross validation works well for both methods nonparametric and semiparametric. In all cases, the estimator with $h_{cv}$ performs similarly to the estimator with the optimal $h$.

\section{Appendix: concentration rates for kernel regression estimators}

 The result follows from the formulation of Talagrand inequality \citep{talagrand1994} given in Theorem 2.1 in \cite{gine2002}.

\begin{lemma}\label{lem:talagrand}
Let $(Y_i\in \R,\ X_i\in \R^q,\ i=1,\ldots, n )$ denote a sequence of random variables independent and identically distributed such that $X_1$ has a bounded density $f$. Given $\widetilde K:\mathbb R^q\mapsto \mathbb R$ such that $\int \widetilde K(u)^2du<+\infty$ and $ \Psi$ a class of real functions defined on $\R^{q+1}$, if both classes $\Psi$ and $\{\widetilde K\left(\frac{x-\cdot}{h}\right),\ x\in \mathbb R^q, \ h>0 \}$ are bounded measurable $VC$ classes, it holds that
\begin{align*}
 \sup_{\psi \in \Psi, \ x\in \mathcal Q} \left|\frac 1 {n}  \sum_{i=1}^n\big\{ \psi(Y_i,X_i) \widetilde K_h(x-X_i)-E[ \psi(Y_1,X_1)  \widetilde K_h(x-X_1)]\big\} \right| =O_{P_\infty}\left ( \sqrt {\frac{|\log(h)|}{nh^{q}}} \right) .
\end{align*}

\end{lemma}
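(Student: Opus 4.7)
The plan is to apply the Talagrand-type concentration inequality for supremums over VC classes, in the precise form stated as Theorem 2.1 of \cite{gine2002}, to the class
\begin{align*}
\mathcal F_h = \{(y,z)\mapsto \psi(y,z) \widetilde K_h(x-z)\ :\ \psi\in \Psi,\ x\in \mathcal Q\}.
\end{align*}
First I would check that $\mathcal F_h$ is a bounded measurable VC class of functions with a controlled uniform entropy integral. Since $\mathcal F_h$ is the pointwise product of two VC classes, one from the hypotheses on $\Psi$ and the other from the hypothesis on $\{\widetilde K((x-\cdot)/h)\ :\ x\in \R^q,\ h>0\}$, standard stability results for VC classes (e.g.\ van der Vaart and Wellner, Section~2.6) yield that $\mathcal F_h$ is VC with index bounded independently of $h$, and with an envelope of the order $U = \text{const.}\, h^{-q}$ coming from the factor $h^{-q}\|\widetilde K\|_\infty$ together with a uniform bound on $\Psi$.

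Second I would bound the uniform variance. Using the boundedness of the density $f$ of $X_1$ and the hypothesis $\int \widetilde K^2<+\infty$, a change of variables gives
\begin{align*}
\sup_{\psi\in \Psi,\ x\in \mathcal Q} E[\psi(Y_1,X_1)^2 \widetilde K_h(x-X_1)^2] \leq \|f\|_\infty\|\Psi\|_\infty^2 h^{-q} \int \widetilde K(u)^2 du,
\end{align*}
so that $\sigma^2 = \text{const.}\, h^{-q}$. Plugging $U\sim h^{-q}$ and $\sigma^2 \sim h^{-q}$ into the inequality of \cite{gine2002} Theorem 2.1, the dominant term $\sqrt{n \sigma^2 \log(U/\sigma)}$ yields, after normalization by $n$, the expected rate $\sqrt{|\log h|/(n h^q)}$. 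The secondary term $n^{-1} U \log(U/\sigma)$ is of order $|\log h|/(nh^q)$, which is dominated by the square root term under any reasonable bandwidth sequence.

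The main obstacle in writing out the proof is the verification of the VC property and the uniform entropy integral for $\mathcal F_h$ with constants that do not depend on $h$; this requires using the ``shape'' parametrization of the kernel class (which is VC in $(x,h)$ by assumption) and the stability of VC classes under products and scaling by bounded functions. Once this is settled, the concentration step is a direct invocation of \cite{gine2002}, and the supremum over the additional parameter $h$ is absent in the statement since $h=h_n$ is deterministic. I would finish by pointing out that the hypotheses of Theorem~2.1 in \cite{gine2002} (boundedness, measurability and the uniform entropy bound) are exactly the ones provided by the VC assumptions, and that the resulting expression is precisely $O_{P_\infty}\big(\sqrt{|\log h|/(nh^q)}\big)$.
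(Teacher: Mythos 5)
Your proposal is correct and follows essentially the same route as the paper: both treat the product class $\Psi\cdot\{\widetilde K((x-\cdot)/h)\}$ as a bounded measurable VC class, bound the variance via $\|f\|_\infty$ and $\int \widetilde K^2$, and invoke Theorem 2.1 of Gin\'e and Guillou (2002). The only (cosmetic) difference is that you keep the $h^{-q}$ normalization inside the function class (so $U\sim\sigma^2\sim h^{-q}$) whereas the paper works with the unnormalized kernel (so $U\sim \text{const}$, $\sigma^2\sim h^{q}$) and divides by $nh^q$ at the end; the two computations are equivalent.
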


\begin{proof}
 The empirical process to consider is indexed by the product class $\Psi \widetilde{ \mathcal K}_{h}$, where $\widetilde{ \mathcal K}_{h} = \{\widetilde K(\frac{x-\cdot}{h}),\ x\in \mathbb R^q \}$ which is uniformly bounded $VC$ since the product of two uniformly bounded $VC$ classes remains uniformly bounded $VC$. The variance satisfies
\begin{align*}
\text {var}(\psi(Y_1,X_1)  \widetilde K(h^{-1}(x-X_1)) & \leq E \psi(Y_1,X_1)^2  \widetilde K(h^{-1}(x-X_1))^2 \\
&\leq \|\psi\|_\infty^2 \|f\|_\infty  h^q\int \widetilde K(u)^2du ,
\end{align*}
and a uniform bound is given by $\|\psi \widetilde K \|_\infty \leq \sup_{\psi\in \Psi} \| \psi \|_\infty\|\widetilde K \|_\infty$. The application of Theorem 2.1 in \cite{gine2002} gives the specified bound.
\end{proof}

\bibliographystyle{chicago}

\end{document}